\DeclareMathOperator{\cha}{char}
\DeclareMathOperator{\Ad}{Ad}
\DeclareMathOperator{\Int}{Int}
\DeclareMathOperator{\Cent}{Cent}
\DeclareMathOperator{\cent}{\mathfrak{c}}
\DeclareMathOperator{\Rad}{Rad}
\DeclareMathOperator{\Lie}{Lie}
\DeclareMathOperator{\coker}{coker}
\DeclareMathOperator{\Aut}{Aut}
\DeclareMathOperator{\Ext}{Ext}
\DeclareMathOperator{\Hom}{Hom}
\DeclareMathOperator{\Der}{Der}
\DeclareMathOperator{\SL}{SL}
\DeclareMathOperator{\PSL}{PSL}
\DeclareMathOperator{\GL}{GL}
\newcommand {\ZZ} {\mathbb{Z}}
\newcommand {\CC} {\mathbb{C}}
\newcommand {\QQ} {\mathbb{Q}}
\newcommand {\RR} {\mathbb{R}}
\newcommand {\Fp} {\mathbb{F}_p}
\newcommand {\Gm} {\mathbb{G}_m}
\newcommand {\N} {\mathcal{N}}
\newcommand {\U} {\mathcal{U}}
\newcommand {\m} {\mathfrak{m}}
\newcommand {\h} {\mathfrak{h}}
\newcommand {\g} {\mathfrak{g}}
\newcommand {\gl} {\mathfrak{gl}}
\newcommand {\p} {\mathfrak{p}}
\newcommand {\R} {\mathcal{R}} 
\newtheorem  {thm}[equation]        {Theorem}
\newtheorem* {thm*}                {Theorem}
\newtheorem  {corollary}[equation]   {Corollary}
\newtheorem* {corollary*}           {Corollary}
\newtheorem  {proposition}[equation] {Proposition}
\newtheorem  {lemma}[equation]       {Lemma}
\newtheorem*  {lemma*}       {Lemma}
\theoremstyle{definition}
\newtheorem  {definition}[equation]  {Definition}
\newtheorem* {definition*}          {Definition}
\newtheorem  {example}[equation]     {Example}
\newtheorem* {example*}             {Example}
\newtheorem* {examples*}             {Examples}
\newtheorem* {notation*}            {Notation}
\newtheorem* {exercise*}            {Exercise}
\theoremstyle{remark}
\newtheorem  {remark}[equation]      {Remark}
\newtheorem* {remark*}              {Remark}
\newtheorem  {remarks}[equation]      {Remarks}
\newtheorem* {remarks*}              {Remarks}
\numberwithin{equation}{section}
\subjclass[2010]{20G15}
\title[On the smoothness of centralizers in reductive groups]
{On the smoothness of centralizers \\ in reductive groups}
\author[S. Herpel]{Sebastian Herpel}
\address
{Fakult\"at f\"ur Mathematik,
Ruhr-Universit\"at Bochum,
44780 Bochum, Germany}
\email{sebastian.herpel@rub.de}
\begin{document}

\begin{abstract}
Let $G$ be a connected reductive algebraic group over an algebraically closed
field $k$. In a recent paper, Bate, Martin, R\"ohrle and Tange show that every
(smooth) subgroup of $G$ is separable provided that the characteristic of $k$
is very good for $G$. Here separability of a subgroup means that its
scheme-theoretic centralizer in $G$ is smooth. Serre suggested extending this
result to arbitrary, possibly non-smooth, subgroup schemes of $G$. The aim of
this note is to prove this more general result. Moreover, we provide 
a condition on the characteristic of $k$ that is necessary and sufficient 
for the smoothness of all centralizers in $G$. We finally relate this condition
to other standard hypotheses on connected reductive groups.
\end{abstract}

\maketitle

\section{Introduction}
Let $G$ be a connected reductive algebraic group over an algebraically closed
field $k$. A closed subgroup $H \subseteq G$ is called \emph{separable in $G$}
if the Lie algebra of its centralizer coincides with the infinitesimal
fixed points of $H$ in $\Lie(G)$. Here, in contrast to the later
sections of the paper, we do not consider scheme-theoretic centralizers. A
similar notion exists for subalgebras of $\Lie(G)$.
See \cite{BMR:2005} and \cite{BMRT:2010} for these
concepts and their importance in the context of Serre's notion of $G$-complete
reducibility.  
The starting point of this note is the following result of Bate,
Martin, R\"ohrle and Tange (\cite[Thm.\ 1.2]{BMRT:2010}).
Liebeck-Seitz (\cite[Thm.\ 3]{LS:1996})
and Lawther-Testerman (\cite[Thm.\ 2]{LT:1999})
earlier obtained special cases of this result.

\begin{thm*}
Suppose that $\cha k$ is very good for $G$. Then any subgroup of $G$ is
separable in $G$ and any subalgebra of $\Lie(G)$ is separable in $\Lie(G)$.
\end{thm*}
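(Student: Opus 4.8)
The plan is to deduce both assertions from the fact that \emph{every} centralizer in a general linear group is smooth, by embedding $G$ into a suitable $\GL(V)$ in a way that lets this smoothness descend. First reformulate: for a closed subgroup $H$ of $G$ (resp.\ a subalgebra $\h$ of $\g:=\Lie(G)$), separability in the sense of the introduction is equivalent to smoothness of the scheme-theoretic centralizer $C_G(H)$ (resp.\ $C_G(\h)$). Indeed the Lie algebra of the reduced centralizer is always contained in the tangent space of the centralizer scheme at $e$, and that tangent space is exactly the infinitesimal fixed point space $\g^{H}$ (resp.\ $\g^{\h}$); since the centralizer scheme is a group scheme, it is smooth precisely when its dimension equals that of its tangent space at $e$, which is exactly the equality in question. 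So it suffices to show: for every closed subgroup $H$ of $G$ and every subspace $\h\subseteq\g$, the schemes $C_G(H)$ and $C_G(\h)$ are smooth.

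Two facts make $\GL(V)$ the right ambient group. First, for any subset $S$ of $\GL(V)$, resp.\ of $\gl(V)=\End(V)$, the centralizer $C_{\GL(V)}(S)$ is the unit group of the subalgebra of $\End(V)$ commuting with $S$; being an open subscheme of an affine space (the equations $gs=sg$ are linear in $g$) it is smooth, with Lie algebra the commutant $\gl(V)^{S}$. Second, the point of very good characteristic is that one can choose a faithful $G$-module $V$ with $\g$ a direct summand of $\gl(V)$ as a $G$-module; call such an embedding \emph{good}. Granting a good embedding, fix a $G$-stable complement $\gl(V)=\g\oplus\m$ and carry out the descent for $H\le G$ (the case of $\h$ is verbatim the same). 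Scheme-theoretically $C_G(H)=C_{\GL(V)}(H)\cap G$; since taking $H$-invariants commutes with direct sums, $\gl(V)^{H}=\g^{H}\oplus\m^{H}$, and as the tangent space at $e$ of a scheme-theoretic intersection is the intersection of the tangent spaces, $T_eC_G(H)=\gl(V)^{H}\cap\g=\g^{H}$ (using $\m\cap\g=0$). It remains to see $\dim C_G(H)=\dim\g^{H}$. Consider the quotient map $\pi\colon\GL(V)\to\GL(V)/G$ and the subvariety $Y:=\pi\bigl(C_{\GL(V)}(H)\bigr)$. Since $C_{\GL(V)}(H)$ is smooth, $Y$ is an orbit of a smooth group, hence a smooth variety, and the orbit map $C_{\GL(V)}(H)\to Y$ is separable, so $\dim Y$ equals the dimension of the image of its differential at $e$. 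That differential is the restriction to $\gl(V)^{H}=\g^{H}\oplus\m^{H}$ of $d\pi_e\colon\gl(V)\to\gl(V)/\g\cong\m$, whose image is $\m^{H}$; thus $\dim Y=\dim\m^{H}$. On the other hand $\dim Y=\dim C_{\GL(V)}(H)-\dim C_G(H)=\bigl(\dim\g^{H}+\dim\m^{H}\bigr)-\dim C_G(H)$. Comparing gives $\dim C_G(H)=\dim\g^{H}$, so $C_G(H)$ is smooth.

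It remains to produce a good embedding, and this is where the hypothesis is used; I would first reduce to $G$ simple. A central torus is handled by embedding $\Gm^{r}$ diagonally in $\GL_{r}$ (there $\gl_{r}$ is the sum of $\Lie(\Gm^{r})$ and the off-diagonal part), a direct product by assembling good embeddings of the factors, and one may switch to any convenient isogeny type because in very good characteristic the fundamental group of a simple group has order prime to $\cha k$, so the relevant isogenies are étale and carry good embeddings and separability along. For $G$ simple of classical type, take $V$ the natural module: for types $\mathrm{B}$, $\mathrm{C}$, $\mathrm{D}$ one has $\gl(V)=\g\oplus\m$ with $\m$ the operators self-adjoint for the defining form (using $\cha k\neq2$, so the $\pm1$-eigenspaces make sense), and for type $\mathrm{A}_{n-1}$ one has $\gl_{n}=\mathfrak{sl}_{n}\oplus k\cdot\mathrm{id}$ (using $\cha k\nmid n$, so that the identity does not lie in $\mathfrak{sl}_{n}$). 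For $G$ simple of exceptional type, take $V=\g$ with the adjoint action; using the non-degenerate $G$-invariant symmetric bilinear form on $\g$ available in very good characteristic (so $\g\cong\g^{*}$ as $G$-modules and $\gl(\g)\cong\g\otimes\g=S^{2}\g\oplus\wedge^{2}\g$ since $\cha k\neq2$) together with $[\g,\g]=\g$, one splits a copy of $\g$ off $\wedge^{2}\g$ via the Lie bracket and its transpose with respect to the form.

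The main obstacle is precisely this last point for the exceptional groups: one must know that the invariant form exists and is non-degenerate in very good characteristic, and one must check that the $G$-submodule of $\gl(\g)$ it produces is the one coming from $\mathrm{ad}$, which rests on the structure theory of simple Lie algebras in good characteristic. By contrast the descent step is soft, using only that centralizers in $\GL(V)$ are smooth and that orbits of smooth groups are smooth, and the reformulation together with the reductions to the simple case is routine.
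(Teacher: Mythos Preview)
Your overall plan---reformulate separability as smoothness of the scheme-theoretic centralizer, prove smoothness of all centralizers in $\GL(V)$, and then descend along a reductive pair $(\GL(V),G)$ produced from the very good hypothesis---is exactly the paper's strategy (its Lemmas \ref{lem:sep=smooth}, \ref{lem:caseGL}, \ref{lem:red-pair}, \ref{lem:sep-isogeny}, together with Richardson's reductive pairs cited in the proof of Theorem \ref{thm:main}). However, your descent step contains a circularity.

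You write that ``the orbit map $C_{\GL(V)}(H)\to Y$ is separable, so $\dim Y$ equals the dimension of the image of its differential at $e$''. But separability of this orbit map is \emph{equivalent} to smoothness of its (scheme-theoretic) stabilizer, and that stabilizer is precisely $C_{\GL(V)}(H)\cap G=C_G(H)$, which is what you are trying to prove is smooth. Concretely: without separability one only knows that the image of the differential sits inside $T_{\overline e}Y$, giving $\dim\m^{H}\le\dim Y$, hence
\[
\dim C_G(H)=\dim C_{\GL(V)}(H)-\dim Y\le\dim\g^{H}+\dim\m^{H}-\dim\m^{H}=\dim\g^{H},
\]
which is the trivial inequality, not the one you need. (For a sanity check outside very good characteristic: with $G=\SL_p$, $H=G$, one gets $C_{\GL_p}(G)=\Gm$ mapping to $\GL_p/\SL_p\cong\Gm$ by $t\mapsto t^{p}$, which is inseparable; here $C_G(H)=\mu_p$ is indeed non-smooth.)

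The paper's Lemma \ref{lem:red-pair} fixes this by replacing your orbit $Y$ with the larger closed subscheme $(\GL(V)/G)^{H}$ and comparing \emph{tangent spaces} there rather than invoking separability. One has a monomorphism $C_{\GL(V)}(H)/C_G(H)\hookrightarrow(\GL(V)/G)^{H}$, and the left-hand quotient is smooth of dimension $c:=\dim C_{\GL(V)}(H)-\dim C_G(H)$ regardless of whether $C_G(H)$ is smooth. Hence
\[
c\le\dim_{\overline e}(\GL(V)/G)^{H}\le\dim_k T_{\overline e}\bigl((\GL(V)/G)^{H}\bigr)=\dim_k\bigl(T_{\overline e}(\GL(V)/G)\bigr)^{H}=\dim_k\m^{H},
\]
where the last identification uses that $\GL(V)\to\GL(V)/G$ is smooth (because $G$ is smooth), so $d\pi_e$ is surjective with kernel $\g$ and $T_{\overline e}(\GL(V)/G)\cong\m$ as $H$-modules. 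This yields $c\le\dim\m^{H}$ and therefore $\dim C_G(H)\ge\dim\g^{H}$, as required. If you insert this argument in place of your separability claim, the rest of your proof goes through.

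A minor remark on the ``good embedding'' step: for the exceptional types the paper simply quotes Richardson \cite{Rich:1967} for the existence of a reductive pair $(\GL_m,G')$ in very good characteristic, rather than building one by hand from the adjoint representation; your sketch via the invariant form and the splitting of $\g$ inside $\wedge^{2}\g$ is plausible but would need more justification than you give.
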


Separability of a subgroup $H$ of $G$ can also be characterized as the
smoothness of the scheme-theoretic centralizer of $H$ in $G$, see Section \ref{sec:sep}. 
In particular,
all subgroups are separable if the characteristic of the ground field is zero
(all algebraic group schemes in characteristic zero are smooth, due to a
theorem of Cartier). The above theorem is therefore an instance of the general
principle that a characteristic zero result becomes true in positive
characteristic, provided that the characteristic is ``big enough''.  More
precisely, the assertion of the theorem then can be interpreted in the
following way: If $H$ is an arbitrary closed subgroup scheme of $G$ and if
$\cha k$ is very good for $G$, then the scheme-theoretic centralizer of $H$ in
$G$ is smooth provided that either $H$ is smooth or $H$ is infinitesimal of
height one (\cite[Rem.\ 3.5(vi)]{BMRT:2010}, see Lemma \ref{lem:sep=smooth}
below).

We are going to extend this result to arbitrary closed subgroup schemes, and
to the weaker 
condition that the characteristic is only 
``pretty good for $G$'' (Definition \ref{def:G-vg}); see Theorem \ref{thm:main}. 
We are also going to exhibit examples of non-smooth centralizers in small
characteristic. In \cite[Ex.\ 3.11]{BMRT:2010}, the authors construct
non-separable subgroups of simple groups of type $G_2$ and $F_4$ in
characteristic two. We observe that non-separable subgroups may be found in any
reductive group in non pretty good characteristic; see Example \ref{ex:counterexamples}.

Combining these results, the following is our main theorem:
\begin{thm}\label{thm:examples}
Let $G$ be a connected reductive algebraic group. Then the characteristic of $k$ is 
zero or pretty good for $G$ if and only if all centralizers of closed subgroup schemes in $G$
are smooth. 
\end{thm}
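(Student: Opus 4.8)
The statement is an equivalence, and I would prove the two implications separately; each is essentially available once the earlier parts of the paper are in place.

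\emph{Forward implication.} Suppose $\cha k$ is zero or pretty good for $G$. If $\cha k = 0$, then by Cartier's theorem every affine algebraic group scheme over $k$ is smooth; a scheme-theoretic centralizer $C_G(H)$ is such a group scheme, so it is smooth. If $\cha k = p > 0$ is pretty good (Definition~\ref{def:G-vg}), this is precisely Theorem~\ref{thm:main}. For the plan, here is how I would establish Theorem~\ref{thm:main}. By Lemma~\ref{lem:sep=smooth} together with the standard smoothness criterion, $C_G(H)$ is smooth iff $\dim C_G(H) = \dim \Lie C_G(H)$; since one always has $\Lie C_G(H) \subseteq \g^H$, the scheme-theoretic fixed points of $H$ on $\g = \Lie G$, and the reduced group $C_G(H)^{\mathrm{red}}$ is smooth of dimension $\dim C_G(H)$, it suffices to bound $\dim \g^H \le \dim C_G(H)$. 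I would reduce to two extreme cases by d\'evissage: using the connected-\'etale sequence $1 \to H^\circ \to H \to \pi_0(H) \to 1$ one splits off the smooth quotient $\pi_0(H)$, whose centralizers are handled by the theorem of Bate--Martin--R\"ohrle--Tange (and ``pretty good'' is inherited by the relevant subgroups), so one may assume $H$ infinitesimal; then filtering $H$ by its intersections with the Frobenius kernels $G_n$ and inducting on the height reduces to $H$ of height one, i.e.\ to a restricted Lie subalgebra of $\g$, which is the subalgebra-separability half of the Bate--Martin--R\"ohrle--Tange theorem, now to be run under the weaker hypothesis. The point of ``pretty good'' is that good primes already control the nilpotent and unipotent contributions, while the absence of $p$-torsion in the quotients of $X^*(T)$ and $X_*(T)$ attached to subsets of the simple roots controls the toral and central contributions to $\g^H$.

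\emph{Reverse implication.} Suppose $\cha k = p > 0$ is not pretty good for $G$; I would argue by contraposition, producing a closed subgroup scheme with non-smooth centralizer — this is Example~\ref{ex:counterexamples}. Unwinding Definition~\ref{def:G-vg}, the failure of ``pretty good'' yields nonzero $p$-torsion in the quotient of $X^*(T)$ (or of $X_*(T)$) associated to some subset $I$ of the simple roots. In the character-torsion case, the torsion class determines a copy of $\mu_p$ inside $T$ on which some root $\alpha$ restricts trivially although $\g_\alpha \not\subseteq \Lie C_G(\mu_p)^{\mathrm{red}}$; hence $\g^{\mu_p}$ strictly contains $\Lie C_G(\mu_p)$ and $C_G(\mu_p)$ is not smooth. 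In the cocharacter-torsion case one runs the dual argument, replacing $\mu_p$ by an infinitesimal unipotent subgroup scheme (a Frobenius kernel of a suitable root subgroup).

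\emph{Main obstacle.} The difficulty is entirely in the forward implication, i.e.\ in Theorem~\ref{thm:main}: handling arbitrary infinitesimal subgroup schemes that are neither smooth nor of height one — where Bate--Martin--R\"ohrle--Tange does not apply directly — and making the inductive d\'evissage through the Frobenius kernels go through, while checking at each stage that ``pretty good'' survives passage to the subquotients and centralizing Levi subgroups that appear. Equally delicate is matching the combinatorial condition of Definition~\ref{def:G-vg} to the geometric bound on $\dim \g^H$, and verifying that nothing is lost in weakening ``very good'' to ``pretty good''. The reverse implication, by contrast, needs only one well-chosen counterexample for each $G$ in non pretty good characteristic.
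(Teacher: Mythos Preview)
Your plan for the forward implication diverges from the paper's and contains a real gap. The paper never performs a d\'evissage on the subgroup scheme $H$; it keeps $H$ arbitrary and reduces on the \emph{ambient} group $G$. The three key lemmas are Lemma~\ref{lem:caseGL} (all centralizers in $\GL_n$ are smooth, because $C_{\GL_n}(H)$ is an open subscheme of the reduced linear scheme $(\gl_n)_a^H$), Lemma~\ref{lem:red-pair} (smoothness of centralizers descends along a reductive pair $(G',G)$), and Lemma~\ref{lem:sep-isogeny} (smoothness of all centralizers is invariant under central separable isogenies). The substance of Theorem~\ref{thm:main} is then a root-datum computation showing that in pretty good characteristic $G$ is, up to separable isogenies and torus factors, a product of various $\GL_{m_i+1}$ and simple groups in very good characteristic; the latter sit in some $(\GL_m,G_i)$ as a reductive pair by Richardson. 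No appeal to the BMRT theorem is made.

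Your d\'evissage on $H$ breaks at the inductive step. Writing $C_G(H) = C_G(H_1)^{H/H_1}$ with $H_1$ the height-one piece, the group $C_G(H_1)$ is in general neither connected nor reductive---take $H_1 = \mathbb{G}(kX)$ for $X\in\g$ nilpotent, so that $C_G(H_1)$ has a non-trivial unipotent radical---and there is no reductive ambient group in which to re-run a BMRT-type argument at the next height. The same obstruction already appears in your first step: after the connected--\'etale sequence you have $C_G(H)=C_G(H^\circ)^{\pi_0(H)}$, but $\pi_0(H)$ may have order divisible by $p$ (there is no hypothesis on $H$), so neither BMRT nor Theorem~\ref{thm:lissite} controls the outer fixed points, and $C_G(H^\circ)$ need not be reductive. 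Also note that $H^\circ$ is \emph{connected}, not infinitesimal; and by \eqref{eq:lie_cent} one has $\Lie C_G(H)=\g^H$, an equality rather than a mere inclusion.

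For the reverse implication your sketch is also off the mark. The paper's counterexamples (Example~\ref{ex:counterexamples}) are \emph{smooth} subgroups containing a maximal torus $T$, so that $C_G(H)=Z(H)$ is diagonalizable and non-smoothness is read off from $p$-torsion in its character group (Lemma~\ref{lem:torsion}). In bad characteristic one takes the pseudo-Levi attached to a closed symmetric $\Phi'\subseteq\Phi$ with $p$-torsion in $\ZZ\Phi/\ZZ\Phi'$; in good but non-pretty-good characteristic (forcing type $A$) one takes $H=\langle T,n_\sigma\rangle$ for a product $\sigma$ of Coxeter elements and computes, via elementary divisors, that the $p$-torsion of $X/(s-1)X$ matches that of $Y/\ZZ\Phi^\vee$. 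Your $\mu_p\subset T$ idea is salvageable when $X/\ZZ\Phi$ has $p$-torsion (indeed then $Z(G)=C_G(G)$ is already non-smooth), but the claimed ``dual argument with an infinitesimal unipotent subgroup scheme'' for $p$-torsion in $Y/\ZZ\Phi^\vee$ has no evident realisation: there is no duality on subgroup schemes of $G$ that would trade a $\mu_p$ for an $\alpha_p$.
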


Finally, we show that the universal smoothness of centralizers holds in many classes 
of ``standard'' reductive groups,
as introduced by Jantzen and McNinch-Testerman; see Corollary \ref{cor:univsm}. 
In fact, the different notions of standardness almost coincide (Theorem \ref{thm:standard}). 
This demonstrates that requiring
the characteristic of the ground field to be pretty good for $G$ is a natural assumption
for connected reductive groups.

The paper is organized as follows. 
In Section \ref{sec:prelim}, we recall general facts about
group schemes. We also recall the notions of good and very good primes. 
In Section \ref{sec:vg},
we
introduce the
notion of a pretty good prime.

In Section \ref{sec:smoothcent}, we investigate conditions for the smoothness of centralizers. 
First, in Section \ref{sec:sep}, we relate the above notion of separability to the 
smoothness of certain scheme-theoretic centralizers. In Section \ref{sec:necsm}, it is
our goal to prove the forward implication of Theorem \ref{thm:examples}. In fact, we prove
a slightly more general result in Theorem \ref{thm:main}.

In Section \ref{sec:nonsmooth}, we provide methods to construct non-smooth centralizers outside
of pretty good characteristic. This allows us to give a proof of Theorem \ref{thm:examples} at the end
of this section.

Finally, in Section \ref{sec:standard}, we recall other notions of standard reductive groups, and prove
that these amount to essentially the same as requiring the group to be defined in pretty good
characteristic.

\section{Preliminaries} \label{sec:prelim}

In this section, let $k$ be a field and $\overline{k}$ a fixed algebraic
closure of $k$. Let $S$ be a commutative ring.

\subsection{Basic definitions} \label{subsec:basic}
We call a functor from the category of $S$-algebras to the category of sets
(resp.\ groups) an \emph{$S$-functor} (resp.\ \emph{$S$-group functor}). We
adopt the point of view that (group) schemes over $S$ are certain $S$-(group)
functors. See \cite{DG:1970} and \cite{Jantzen:2003} for this approach. In
particular, morphisms of (group) schemes are just morphisms of $S$-(group)
functors. If $X$ is an $S$-(group)-functor and if $r:R\rightarrow R'$ is a
morphism of $S$-algebras, we write $x_r$ for the image of an element $x \in
X(R)$ under the induced map $X(R) \rightarrow X(R')$. Since we are mainly
interested in affine objects, let us recall the basic definitions: An
\emph{affine $S$-(group) scheme} is an $S$-(group) functor $X$ of the form $X =
\Hom(A,\_)$, where $A$ is an $S$-(Hopf) algebra (and where $\Hom$ denotes
homomorphisms of $S$-algebras). We also write $S[X]$ instead of $A$. We call
$X$ \emph{of finite type} or \emph{algebraic} if $S[X]$ is a finitely generated
$S$-algebra. If $R$ is an $S$-algebra, every affine $S$-(group) scheme $X$
gives rise to an affine $R$-(group) scheme $X_R$, with $R[X_R] = S[X]
\otimes_{S} R$. If $M$ is an $S$-module, we define the $S$-group functor $M_a$
via $M_a(R) = M \otimes_S R$, see \cite[II, \S 1, 2.1]{DG:1970}. If $M$ is
projective and finite, then $M_a$ is an affine $S$-group scheme represented by
the symmetric algebra $S(M^*)$.

\subsection{Notions for group morphisms} \label{subsec:morphisms}

Let  $\varphi: G \rightarrow G'$ be a morphism of affine algebraic $k$-group
schemes. It is called \emph{surjective} if the comorphism $\varphi^*:k[G']
\rightarrow k[G]$ is injective (which usually does $not$ imply surjectivity of
all induced maps $G(R) \rightarrow G'(R)$). Surjectivity can be characterized
by the following property (see \cite[15.5]{Water:1979}): For every $k$-algebra
$R$ and every $g' \in G'(R)$ there is a faithfully flat extension $s:R
\rightarrow \overline{R}$ and an element $g\in G(\overline{R})$ such that
$\varphi(\overline{R})(g)=g'_s$. 
  
Let $H \subseteq G$ and $H' \subseteq G'$ be closed subgroup schemes. The
functor $\varphi^{-1} H': R \mapsto \varphi(R)^{-1}(H'(R))$ is a closed
subgroup scheme of $G$, defined by the ideal generated by $\varphi^*(I')$, if
$I' \subseteq k[G']$ defines $H'$. The induced map $\varphi^{-1} H'
\rightarrow H'$ is surjective if $\varphi$ is surjective (see \cite[VI, 22.4]{KMRT:1998}). 
The \emph{kernel} of $\varphi$ is the closed subgroup scheme $\ker
\varphi := \varphi^{-1} \{e\}$. Let $\varphi H$ be the closed subgroup scheme
of $G'$ defined by the ideal $(\varphi^*)^{-1}(I)$, if $I \subseteq k[G]$
defines $H$. Note that $\varphi(R) (H(R)) \subseteq (\varphi H)(R)$ for all
$k$-algebras $R$. The induced map $H \rightarrow \varphi H$ then is surjective
in the above sense, since its comorphism is injective. In particular,
\begin{align*} (\varphi H)(R) = \{x \in G'(R) \mid \text{ there is a faithfully
flat } s:R\rightarrow \overline{R} \text{ such that }  x_s \in
\varphi(\overline{R})(H(\overline{R}))\}, \end{align*} 
where the forward containment follows from the above
characterization of surjectivity; the reverse inclusion follows from the 
observation that
$x_s (\varphi^*)^{-1}(I) = 0$ implies $x(\varphi^*)^{-1}(I) = 0$.

A sequence of morphisms of affine algebraic group schemes $1 \rightarrow N
\xrightarrow{\varphi} G \xrightarrow{\psi} H \rightarrow 1$ is called
\emph{exact} provided that $\psi$ is surjective and $\varphi$ induces an
isomorphism of $N$ with $\ker(\psi)$. 
A morphism $\pi: G \rightarrow G'$ of affine algebraic $k$-group schemes is
called an \emph{isogeny} if it is surjective and if its kernel is a finite
group scheme.

\subsection{Fixed points and centralizers} \label{subsec:centralizers}
If $G$ is a $k$-group functor and $X$ is a $k$-functor, there is an obvious
notion of a natural action $\alpha: G \times X \rightarrow X$. Suppose such an
action is given. Then we can define the \emph{fixed point functor} $X^G$. It
associates to each $k$-algebra $R$ the set \begin{align*} X^G(R) = \{ x \in
X(R) \mid \alpha(R')(g,x_r) = x_r \text{ for every } r:R \rightarrow R' \text{
and }g \in G(R')\}.  \end{align*} We also define the \emph{centralizer} of a
subfunctor $Y$ of $X$ in $G$ as the functor $\Cent_G(Y)$ that associates to
each $k$-algebra $R$ the group \begin{align*} \Cent_G(Y)(R) = \{ g \in G(R)
\mid \alpha(R')(g_r,y) = y \text{ for every } r:R \rightarrow R' \text{ and }y
\in Y(R')\}.  \end{align*} It is well known that if $G$, $X$ and $Y$ are
$k$-schemes and if $X$ is separated, then $X^G$ is closed in $X$, and
$\Cent_G(Y)$ is closed in $G$ (see \cite[I, 2.6]{Jantzen:2003}). 

We are interested in two special cases of the above situation. If $H$ is a
closed subgroup scheme of an affine algebraic $k$-group scheme $G$, it acts on
$G$ by conjugation. We denote the corresponding fixed point functor by
$C_G(H)$. It is the same as the centralizer $\Cent_G(H)$, where $G$ acts on
itself via conjugation. We call $C_G(H)$ the \emph{centralizer of $H$ in $G$}.
We have the equality \begin{align} \label{eq:cent-base} C_G(H)_{\overline{k}} =
C_{G_{\overline{k}}}(H_{\overline{k}}).  \end{align} The \emph{center} of $G$
is defined by $Z(G) = C_G(G)$. 
An isogeny $\pi: G \rightarrow G'$ of affine algebraic $k$-group schemes
is called \emph{central} if $\ker \pi \subseteq Z(G)$. 

If $M \subseteq N$ are two $k$-vector spaces,
we can consider $M_a$ as a subfunctor of $N_a$. If $G$ acts linearly on
$N_a$ (i.e.\ $R$-linearly on each $N \otimes R$),
then $N$ is called a \emph{$G$-module}.
In this case,
we write $\Cent_G(M)$
for the centralizer $\Cent_G(M_a)$.

\subsection{Tangent spaces and Lie algebras} \label{subsec:tangent}
Let $X$ be an affine $S$-scheme. Let $S[\epsilon] = S[T]/T^2$ with
$\epsilon=\overline{T}$ denote the ring of dual numbers over $S$. Let
$p:S[\epsilon] \rightarrow S$ be the map defined by $\epsilon \mapsto 0$. Let
$x \in X(S)$. Then the (Zariski) \emph{tangent space} of $X$ in $x$ is defined
as a set by \begin{align*} T_x X = \{y \in X(S[\epsilon]) \mid y_p = x \},
\end{align*}    see \cite[II, \S4, 3.3]{DG:1970}. Using $S[X]$, we can write
this as \begin{align*} T_x X = \{x +g\epsilon \in X(S[\epsilon]) \mid g:S[X]
\rightarrow S \text{ is an $S$-derivation with respect to } x:S[X] \rightarrow
S\}, \end{align*} and we can endow $T_x X$ with the $S$-module structure
induced by the bijection with the space of all such point derivations. If
$\varphi: X \rightarrow Y$ is a morphism of affine schemes that sends $x \in
X(S)$ to $y \in Y(S)$, then it induces an $S$-linear map $d\varphi_x :=
\varphi(S[\epsilon])|_{T_x X}  :T_x X \rightarrow T_y Y$.

If $G$ is an affine algebraic $S$-group scheme, we write $\Lie(G) = \g =
T_e(G)$, where $e \in G(S)$ is the identity element. Suppose that $\cha(S)=p$.
Then the $S$-module $\g$ is finite and has the structure of a $p$-Lie algebra
over $S$, see \cite[II, \S7, 3.4]{DG:1970}, and we call it the \emph{Lie
algebra} of $G$. This construction has the following converse (see
\emph{Ibid.}, II, \S7, 3.5): If $\mathfrak{l}$ is a finite projective
$S$-module with a $p$-Lie algebra structure over $S$, then there is an affine
algebraic $S$-group scheme $\mathbb{G}(\mathfrak{l})$ with Lie algebra
isomorphic to $\mathfrak{l}$ such that taking differentials yields a bijection
\begin{align} \label{eq:lie-adjoint} \Hom(\mathbb{G}(\mathfrak{l}),G) \cong
\Hom(\mathfrak{l}, \g), \end{align} where on the right hand side we consider
homomorphisms of $p$-Lie algebras over $S$. Moreover, $\mathbb{G}$ commutes
with changing the base ring. If $S=k$ is a field, $\mathbb{G}$ induces an
equivalence between the category of finite-dimensional $p$-Lie algebras and the
category of algebraic $k$-group schemes of height one; then  for sub-$p$-Lie
algebras $\mathfrak{l} \subseteq \Lie(G)$ (where $G$ is an affine algebraic
$k$-group scheme) the group $\mathbb{G}(\mathfrak{l})$ can be identified with a
closed subgroup scheme of $G$ (\emph{Ibid.}, II, \S7, 4). 

If $G$ is an affine algebraic $k$-group scheme and if $S$ is a $k$-algebra,
then $\Lie(G) \otimes S \cong \Lie(G_S)$ (see \emph{Ibid.}, II, \S4, 4.8). In
the same way we can prove part (a) of the following lemma. The other parts then
follow from the definitions and an argument similar to \emph{Ibid.}, II, \S4,
2.5. 

\begin{lemma} \label{lem:tangent-space}
Let $X$ be an affine algebraic $k$-scheme. Let $x \in X(k)$. Let $G$ be an
affine algebraic $k$-group scheme. 
\begin{itemize} 
\item[(a)] The $k$-functors
$R \mapsto T_{x_R} X_R$ and $(T_x X)_a$ are isomorphic. Here $x_R$ denotes the
image of $x$ in $X(R)$. 
\item[(b)] If $G$ acts naturally on $X$ and if $x \in
X^G(k)$, then $G$ acts linearly on $T_x X$ and there is an equality $(T_x X)^G
= T_x(X^G)$. 
\item[(c)] Let $\varphi: X \rightarrow Y$ be a $G$-equivariant
morphism of affine algebraic $k$-schemes. Suppose that $x \in X^G(k)$. Then the
induced linear map $d \varphi_x: T_x X \rightarrow T_{\varphi(x)} Y$ is also
$G$-equivariant.  
\end{itemize} 
\end{lemma}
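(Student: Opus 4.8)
The plan is to deduce (a) from the dual-numbers computation already used for $\Lie(G_S)\cong\Lie(G)\otimes_kS$, and then to obtain (b) and (c) from the functorial description of the tangent space it provides, in the spirit of \cite[II, \S4, 2.5]{DG:1970}.

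For (a), fix $x\in X(k)$ and let $\m_x=\ker(x)\subseteq k[X]$ be the corresponding maximal ideal. For a $k$-algebra $R$, an element of $X_R(R[\epsilon])=\Hom_{k\text{-alg}}(k[X],R[\epsilon])$ lying over $x_R$ modulo $\epsilon$ is uniquely of the form $f\mapsto x(f)\cdot 1+D(f)\epsilon$ with $D\colon k[X]\to R$ a derivation relative to $x$; such $D$ kill $k$ and $\m_x^2$, so they are in natural bijection with $\Hom_k(\m_x/\m_x^2,R)$, and since $X$ is algebraic $\m_x/\m_x^2$ is finite-dimensional, so $\Hom_k(\m_x/\m_x^2,R)\cong T_xX\otimes_kR$. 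The composite is natural in $R$ and matches the module structures, giving the isomorphism $R\mapsto T_{x_R}X_R$ with $(T_xX)_a$.

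For (b), given a $k$-algebra $R$ and $g\in G(R)$, write $g_\epsilon\in G(R[\epsilon])$ for the image of $g$ under $R\to R[\epsilon]$; then $y\mapsto\alpha(R[\epsilon])(g_\epsilon,y)$ carries $T_{x_R}X_R\subseteq X(R[\epsilon])$ into itself, since modulo $\epsilon$ it becomes $\alpha(R)(g,x_R)=x_R$ by $x\in X^G(k)$. Via (a) this is a natural action of $G$ on $(T_xX)_a$; multiplicativity is clear, and naturality of $\alpha$ along the $R$-algebra maps $R[\epsilon]\to R[\epsilon]$, $\epsilon\mapsto\lambda\epsilon$ (which fix $g_\epsilon$), together with the usual argument for additivity, shows the action is $R$-linear on each $T_xX\otimes_kR$, i.e.\ $G$ acts linearly on $T_xX$. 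For the identity $(T_xX)^G=T_x(X^G)$ inside $T_xX$, regard a tangent vector as $y\in X(k[\epsilon])$ with $y_p=x$. If $y\in X^G(k[\epsilon])$, specializing the defining condition of $X^G$ to the base-change maps $k[\epsilon]\to R[\epsilon]$ gives $\alpha(R[\epsilon])(g_\epsilon,y_R)=y_R$ for all $R$ and $g\in G(R)$, i.e.\ $y\in(T_xX)^G$. Conversely, if $y\in(T_xX)^G$ and $r\colon k[\epsilon]\to R'$ is any $k$-algebra map with $t:=r(\epsilon)$, then $t^2=0$ and the $R'$-algebra map $\mu_t\colon R'[\epsilon]\to R'$, $\epsilon\mapsto t$, is $k[\epsilon]$-linear with $\mu_t\circ(k[\epsilon]\to R'[\epsilon])=r$; applying $\mu_t$ to the relation $\alpha(R'[\epsilon])(h_\epsilon,y_{R'})=y_{R'}$ (valid for all $h\in G(R')$) yields $\alpha(R')(h,y_r)=y_r$, so $y\in X^G(k[\epsilon])$ and hence $y\in T_x(X^G)$.

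For (c), equivariance of $\varphi$ first forces $\varphi(x)\in Y^G(k)$, and then for $g\in G(R)$ and $y\in T_{x_R}X_R$ naturality and $G$-equivariance of $\varphi$ at $R[\epsilon]$-points give $d\varphi_x(g\cdot y)=\varphi(R[\epsilon])(\alpha_X(g_\epsilon,y))=\alpha_Y(g_\epsilon,\varphi(R[\epsilon])(y))=g\cdot d\varphi_x(y)$. The genuinely delicate point is the reverse inclusion in (b): the fixed-point functor $X^G$ must be tested against an \emph{arbitrary} $k[\epsilon]$-algebra rather than merely against rings of dual numbers, and the map $\mu_t$ is exactly the device that reduces the general test ring to that case; everything else — the derivation bookkeeping in (a), the $R$-linearity and multiplicativity in (b), and the closed-immersion fact giving $T_x(X^G)\subseteq T_xX$ — is routine.
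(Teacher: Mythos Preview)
Your proof is correct and follows essentially the same approach as the paper's: the paper gives no detailed argument but says that (a) is proved ``in the same way'' as $\Lie(G_S)\cong\Lie(G)\otimes_kS$ (i.e.\ the dual-numbers/derivations computation you carry out) and that (b), (c) ``follow from the definitions and an argument similar to \cite[II, \S4, 2.5]{DG:1970}'', which is precisely the functorial reasoning you spell out, including the key $\mu_t$ trick for the reverse inclusion in (b).
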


If $G$ is an affine algebraic $k$-group scheme and if $g \in G(R)$, then
conjugation with $g$ induces an automorphism $\Int(g):G_R \rightarrow G_R$,
sending $h \in G(R')$ to $g_r h g_r^{-1}$ for each morphism $r:R \rightarrow R'$
of $k$-algebras.
This in turn induces an automorphism $\Ad(g)=d \Int(g):\Lie(G_R) \rightarrow
\Lie(G_R)$. Using the above isomorphism of $k$-functors $\Lie(G_?) \cong \g_a$,
we see that $G$ acts linearly on $\g_a$ via $\Ad$. Now suppose that $H$ is a
closed subgroup scheme of $G$, and that $i:H \rightarrow G$ is the inclusion.
With the notation of Section \ref{subsec:centralizers} we find that
\begin{align} C_G(H)(R) &=\{ g \in G(R) \mid \Int(g) = i_R:H_R \rightarrow G_R
\}, \label{eq:cent1} \\ \Cent_G(\h)(R) &=\{ g \in G(R) \mid \Ad(g) =
di_R:\Lie(H_R) \rightarrow \Lie(G_R) \}. \label{eq:cent2} \end{align}

Lemma \ref{lem:tangent-space} (b) also implies that
\begin{align} \label{eq:lie_cent}
\Lie(C_G(H)) = \g^H.
\end{align}

\subsection{The identity component} \label{subsec:con-comp}
Let $G$ be an affine algebraic $k$-group scheme. There is a finite group
scheme $\pi_0 G$ which is represented by the largest separable subalgebra of
$k[G]$, see \cite[Thm.\ 6.7]{Water:1979}. Let $G^\circ$ denote the kernel of
the corresponding morphism $G \rightarrow \pi_0 G$. Then $G^\circ$ is a
connected closed normal subgroup scheme of $G$, called the \emph{identity
component} of $G$. 

Now let $G$ be an affine algebraic $S$-group scheme. For a prime ideal $\p
\subseteq S$, let $\kappa(\p)$ denote the residue field of the local ring
$S_\p$. Then we can construct the identity component of the affine
$\kappa(\p)$-group scheme $G_{\kappa(\p)}$. For $x \in G(R)$ let $x_\p$ denote
the image of $x$ in $G_{\kappa(\p)}(R \otimes \kappa(\p))$ under the obvious
map. We define a subgroup functor $G^\circ$ of $G$ via \begin{align*}
G^\circ(R) = \{x \in G(R) \mid x_\p \in G_{\kappa(\p)}^\circ(R \otimes
\kappa(\p)) \text{ for all prime ideals } \p \subseteq S \}, \end{align*} and
we call $G^\circ$ the \emph{identity component} of $G$. This construction
commutes with base change. In particular, for an affine algebraic $k$-group
scheme $G$ and a $k$-algebra $R$ we have \begin{align} \label{eq:comp-base}
(G^\circ)_R = (G_R)^\circ.  \end{align} In fact, our definition coincides with
the one in \cite[VI B, 3.1]{SGA3}. If $\varphi: G \rightarrow G'$ is a morphism
of affine algebraic $S$-group schemes, then it induces a morphism of $S$-group
functors $G^\circ \rightarrow (G')^\circ$.

\subsection{Smoothness and algebraic groups} \label{subsec:smoothness}
Let $X$ be an affine algebraic $k$-scheme and consider a point $x \in X(k)$. 
Let $\dim_x X$ be the dimension of the local ring $k[X]_x$ obtained by
localizing $k[X]$ at the kernel of $x$.
We say that $x \in X(k)$ is a
\emph{regular point} if $\dim_x X = \dim_k T_x X$. This is equivalent to the
regularity of $k[X]_x$. 
Using the dimension formula for flat morphisms (see \cite[I,
\S3, 6.3]{DG:1970}), we get the following lemma.

\begin{lemma} \label{lem:surjective-diff}
Let $\varphi:X \rightarrow Y$ be a flat morphism of affine algebraic
$k$-schemes. Suppose that $x \in X(k)$ and $y = \varphi(x) \in Y(k)$ are
regular points. Suppose that $x$ is also regular in the fibre
$\varphi^{-1}(\{y\})$. Then the induced map $d\varphi_x: T_x X \rightarrow T_y
Y$ is surjective (with kernel $T_x(\varphi^{-1}(\{y\}))$).  \end{lemma}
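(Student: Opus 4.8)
The plan is to combine the behaviour of the Zariski tangent space under the formation of fibres with the dimension formula for flat morphisms, and then to deduce surjectivity from a dimension count using the regularity of all three points. First I would compute $\ker(d\varphi_x)$. Write $F = \varphi^{-1}(\{y\})$ for the scheme-theoretic fibre, so that $k[F] = k[X] \otimes_{k[Y]} k$, where $k[Y] \to k$ is the augmentation attached to $y$ and $k[X]$ is viewed as a $k[Y]$-algebra via $\varphi^*$; as a functor, $F$ is the fibre product $X \times_Y \{y\}$. Since the tangent space at a rational point is defined through $k[\epsilon]$-points and the formation of $R$-points commutes with fibre products, one gets $T_x F = T_x X \times_{T_y Y} T_y\{y\}$; as $T_y\{y\} = 0$ this means exactly $T_x F = \ker(d\varphi_x)$, and this holds for \emph{any} morphism of affine schemes and any $x \in X(k)$ lying over $y$. (Concretely, $x + g\epsilon \in T_x X$ lies in $F(k[\epsilon])$ if and only if $\varphi(k[\epsilon])(x + g\epsilon) = y_{k[\epsilon]}$, i.e.\ if and only if $d\varphi_x(g) = 0$.) In particular the parenthetical assertion about the kernel holds unconditionally, and it remains to prove surjectivity.

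Now I would bring in the hypotheses. Since $x \in X(k)$, $y \in Y(k)$ and $x \in F(k)$ are regular points, by the definition of regularity recalled above,
\begin{align*}
\dim_k T_x X = \dim_x X, \qquad \dim_k T_y Y = \dim_y Y, \qquad \dim_k T_x F = \dim_x F .
\end{align*}
Because $\varphi$ is flat, the dimension formula for flat morphisms (\cite[I, \S3, 6.3]{DG:1970}) gives $\dim_x X = \dim_y Y + \dim_x F$. Feeding this, together with $\ker(d\varphi_x) = T_x F$, into the identity $\dim_k T_x X = \dim_k \ker(d\varphi_x) + \dim_k \im(d\varphi_x)$ yields
\begin{align*}
\dim_k \im(d\varphi_x) = \dim_k T_x X - \dim_k T_x F = \dim_x X - \dim_x F = \dim_y Y = \dim_k T_y Y .
\end{align*}
Since $\im(d\varphi_x)$ is a subspace of the finite-dimensional space $T_y Y$ of the same dimension, $d\varphi_x$ is surjective, and its kernel is $T_x F = T_x(\varphi^{-1}(\{y\}))$ as computed above.

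The only step that uses anything beyond bookkeeping is the appeal to the dimension formula $\dim_x X = \dim_y Y + \dim_x F$, which genuinely needs the flatness of $\varphi$ — it fails for general morphisms, where the fibre dimension can jump at $x$. Everything else is formal: the tangent-space functor supplies the identification $\ker(d\varphi_x) = T_x F$ for free, and the three regularity hypotheses are exactly what is needed to pass from that dimension formula to the stated equality of tangent-space dimensions. So I do not expect a real obstacle; the lemma is essentially a repackaging of the flat-morphism dimension formula in terms of tangent spaces.
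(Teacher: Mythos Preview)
Your proof is correct and follows exactly the route the paper indicates: the paper does not write out a proof but simply says the lemma follows from the dimension formula for flat morphisms \cite[I, \S3, 6.3]{DG:1970}, and you have supplied precisely the details behind that sentence --- identifying $\ker(d\varphi_x)$ with $T_x(\varphi^{-1}(\{y\}))$ via the functorial description of tangent spaces, and then using the three regularity hypotheses to turn the dimension formula into the required equality of tangent-space dimensions.
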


An affine algebraic $k$-scheme $X$ is called \emph{smooth} provided that
$X_{\overline{k}}$ is regular, i.e.\ all prime ideals in $k[X] \otimes
\overline{k}$ induce regular local rings. For affine algebraic $k$-group
schemes, this property can be characterized in the following way (see
\cite[21.8, 21.9]{KMRT:1998}, and \cite[II, \S5, 2.1]{DG:1970}):

\begin{proposition} \label{prop:smoothness}
An affine algebraic $k$-group scheme $G$ is smooth if and only if any of the
following statements hold: \begin{itemize} \item[(a)] $G^\circ$ is smooth,
\item[(b)] the identity element $e \in G(k)$ is a regular point, \item[(c)]
$\dim_k \g = \dim G$ (or, equivalently, $\dim_k \g \leq \dim G$), \item[(d)]
$k[G] \otimes \overline{k}$ is reduced.  \end{itemize} \end{proposition}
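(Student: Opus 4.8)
The plan is to reduce at once to an algebraically closed ground field, where smoothness is nothing but regularity, and then to exploit the homogeneity of a group scheme under its own translations.

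\emph{Step 1: reduction to $k=\overline{k}$.} First I would verify that, for each of the four conditions, it holds for $G$ precisely when it holds for $G_{\overline{k}}$. This rests on the base-change isomorphisms $\Lie(G)\otimes_k\overline{k}\cong\Lie(G_{\overline{k}})$ and $(G^\circ)_{\overline{k}}=(G_{\overline{k}})^\circ$ from \eqref{eq:comp-base}, on the invariance $\dim G=\dim G_{\overline{k}}$ of dimension of a finite-type scheme under field extension, and on the triviality $(k[G]\otimes\overline{k})\otimes_{\overline{k}}\overline{k}=k[G]\otimes\overline{k}$; the one non-formal point is to compare regularity and local dimension of $G$ and $G_{\overline{k}}$ at a $k$-point $x$ (e.g.\ $x=e$). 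For that I would use that the local homomorphism $k[G]_{\m_x}\to(k[G]\otimes\overline{k})_{\n}$, with $\m_x=\ker(x\colon k[G]\to k)$ and $\n$ the unique prime above $\m_x$ (unique because the fibre $\overline{k}\otimes_k\kappa(\m_x)=\overline{k}$ is a single point), is flat and local with regular — in fact, field — closed fibre: then regularity both ascends and descends, and the dimension formula for flat local morphisms yields equality of Krull dimensions. Since smoothness of $G$ is by definition the regularity of $G_{\overline{k}}$, this reduces everything to the case $k=\overline{k}$, in which ``smooth'' and ``reduced over $\overline{k}$'' in the statement simply mean ``regular'' and ``reduced''.

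\emph{Step 2: homogeneity, giving (b).} Assuming $k=\overline{k}$, I would note that the closed points of the finite-type scheme $G$ are exactly the $k$-points, and that for each $g\in G(k)$ left translation $\ell_g\colon G\to G$ is a scheme automorphism carrying $e$ to $g$; hence $G(k)$ acts transitively on the set of closed points, so at closed points both the local dimension $\dim_xG$ and the property of being a regular point are independent of $x$. Since a Noetherian scheme is regular iff regular at every closed point, and since $\dim G$ is the supremum of $\dim_xG$ over closed $x$, this yields simultaneously that $\dim_xG=\dim G$ for every closed point $x$ and the equivalence of smoothness with condition (b). The remaining conditions I then expect to be formal. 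For (c): $e$ is a regular point iff $\dim_eG=\dim_kT_eG=\dim_k\g$, which by the previous paragraph reads $\dim G=\dim_k\g$; and since $\dim_eG$ never exceeds the embedding dimension $\dim_k(\m_e/\m_e^2)=\dim_k\g$, one always has $\dim G\le\dim_k\g$, so the inequality ``$\dim_k\g\le\dim G$'' is equivalent to the equality. For (a): $G^\circ$ is open in $G$, so $G$ regular forces $G^\circ$ regular; conversely, regularity of $G^\circ$ gives regularity of $k[G^\circ]_{\m_e}=k[G]_{\m_e}$, i.e.\ $e$ is a regular point of $G$, so $G$ is regular (and, back over the original $k$, ``$G^\circ$ smooth'' unwinds to regularity of $(G_{\overline{k}})^\circ$ by Step 1). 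For (d): a regular local ring is a domain, hence reduced, so $G$ regular implies $k[G]$ reduced; conversely, if $k[G]$ is reduced then, $k$ being perfect, the reduced algebraic $k$-scheme $G$ is generically smooth, hence has a regular closed point, hence is regular by the homogeneity argument.

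\emph{Main obstacle.} The entire content concentrates in the homogeneity step (the translation automorphisms) together with the base-change bookkeeping of Step 1; once these are in place, (a), (c) and (d) are essentially immediate. The genuinely delicate implication is (d)$\Rightarrow$ smooth: it really does use that the base field is perfect — which it is, being algebraically closed, after the reduction in Step 1 — since over an imperfect field a reduced algebraic group scheme can fail to be smooth, and that is exactly why smoothness is defined through $\overline{k}$.
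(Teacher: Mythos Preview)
The paper does not supply its own proof of this proposition: it merely cites \cite[21.8, 21.9]{KMRT:1998} and \cite[II, \S5, 2.1]{DG:1970}. Your argument is correct and is essentially the standard one found in those references --- reduce to $\overline{k}$, then use that left translations act transitively on closed points to propagate regularity from a single point to the whole scheme. The base-change bookkeeping in Step~1 is handled carefully (in particular your observation that the fibre over $\m_e$ is $\overline{k}$, so the flat local map has a regular zero-dimensional closed fibre), and the homogeneity argument in Step~2 is exactly right. Your identification of the one subtle implication, (d)$\Rightarrow$smooth, and the reason it needs perfection of the base field, is also accurate.
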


By a theorem of Cartier (see \cite[II, \S6, 1.1]{DG:1970}), every affine
algebraic $k$-group scheme $G$ is smooth if the characteristic of $k$ is zero.
We reserve the term \emph{algebraic group} over $k$ for a smooth affine
algebraic $k$-group scheme. There is an equivalence of categories between the
category of algebraic groups over $k$ and the category of linear algebraic
groups over $\overline{k}$ that are equipped with a $k$-structure (see \cite{Borel:1991} and
\cite{Springer:1998} for this language). This equivalence can be realized as
the functor $G \mapsto G(\overline{k})$, if we regard $G(\overline{k})$ as an
affine variety with coordinate ring $k[G] \otimes \overline{k}$; it gives a
correspondence between the smooth closed algebraic subgroups of $G$ and the
closed subgroups of $G(\overline{k})$ that are defined over $k$. Moreover, the
Lie algebras $\g \otimes \overline{k}$ and the Lie algebra associated to
$G(\overline{k})$ coincide. 

If $1 \rightarrow N \rightarrow G \rightarrow H \rightarrow 1$ is an exact
sequence of affine algebraic $k$-group schemes,
we have that $\dim G =
\dim N + \dim H$ and that $G$ is smooth if $H$ and $N$ are smooth (see
\cite[22.9-22.11]{KMRT:1998}).

An isogeny $G \rightarrow G'$ is called \emph{separable} if its kernel is smooth. Two affine algebraic
$k$-group schemes are called \emph{separably isogenous} if there exists a
separable isogeny between them.

\subsection{Reductive groups} 
We say that an affine algebraic $k$-group scheme
$G$ is a \emph{reductive} algebraic group over $k$, if it is smooth and if
$G(\overline{k})$ corresponds to a reductive linear algebraic group defined
over $k$ via the equivalence given in Section \ref{subsec:smoothness}.
Equivalently, $G_{\overline{k}}$ has no non-trivial closed smooth connected
normal unipotent subgroup schemes.

If $G$ is a reductive algebraic group, we denote by $\Rad(G)$ its radical (i.e.\
the unique maximal torus of the center of $G$), and by $\Der(G)$ the
derived group of $G$ (see \cite[II, \S5, 4.8]{DG:1970}).

An affine algebraic $k$-group scheme $G$ is called \emph{linearly reductive}
provided that all $G$-modules are semisimple. This is equivalent to the
vanishing of $H^1(G,V)$ for all finite-dimensional $G$-modules $V$ (see
\cite[II, \S3, 3.7]{DG:1970}). 

Suppose $(G',G)$ is a pair of reductive algebraic groups such that $G \subseteq
G'$ is a closed subgroup. Then $(G',G)$ is called a \emph{reductive pair} (see
Richardson \cite{Rich:1967}) provided that $\Lie(G')$ decomposes as a
$G$-module (via the adjoint action restricted from $G'$) into a direct sum
$\Lie(G') = \Lie(G) \oplus \m$.

\subsection{Root data} A \emph{root datum} $\R$ is a quadruple 
$\R=(X,\Phi,Y,\Phi^\vee)$, where $X$ and $Y$ are free abelian groups of
finite rank that are in duality relative to a pairing $\langle,\rangle : 
X \times Y \rightarrow \ZZ$, and where $\Phi \subseteq X$ and $\Phi^\vee \subseteq Y$
are finite sets. The following additional axioms are required: 
(i) there is a bijection $\alpha \mapsto \alpha^\vee$ of $\Phi$ onto $\Phi^\vee$ such
that $\langle \alpha,\alpha^\vee \rangle =2$; (ii) for each $\alpha \in \Phi$, the map 
$x \mapsto x-\langle x,\alpha^\vee\rangle \alpha$ (resp.\ 
$y \mapsto y-\langle \alpha,y\rangle \alpha^\vee$) stabilizes $\Phi$ (resp.\ $\Phi^\vee$).  
For a subset $\Phi' \subseteq \Phi$, we write $\Phi'^\vee = \{\alpha^\vee \mid \alpha \in \Phi' \}$.

A root datum $\R$ is called \emph{reduced} if $\alpha/2 \notin \Phi$ for all $\alpha \in \Phi$.
This means that the \emph{root systems} $\Phi \subseteq \RR\Phi$ and $\Phi^\vee \subseteq 
\RR \Phi^\vee$ are reduced.
A root datum is called \emph{semisimple} provided that the ranks of $X$ and $\ZZ\Phi$ coincide.
Suppose $\tilde{\R}=(\tilde{X},\tilde{\Phi},\tilde{Y},\tilde{\Phi}^\vee)$ is another root datum, $f: X \rightarrow 
\tilde{X}$ is a $\ZZ$-linear map, and $f^\vee$ is the induced map $\tilde{Y} \rightarrow Y$.
Then $f$ is called an \emph{isogeny of root data}, and denoted $f:\R \rightarrow \tilde{\R}$,
if $f$ is injective with finite cokernel, $f$ maps $\Phi$ onto $\tilde{\Phi}$ and $f^\vee$ maps
$\tilde{\Phi}^\vee$ onto $\Phi^\vee$. See \cite[XXI]{SGA3} for more generalities on root data.

The significance of these notions is given by the following: Consider the category of pairs
$(G,T)$, where $G$ is a connected reductive group over $\overline{k}$ and $T$ is a maximal
torus in $G$, and where morphisms are central isogenies respecting the torus. Considering
characters, roots, cocharacters and coroots relative to $T$, one associates a root datum
$\R(G,T)$ to $(G,T)$. In this way one obtains a contravariant functor to the category
of reduced root data with isogenies, which is almost an equivalence of categories (the functor
is not faithful, but two central isogenies giving the same isogenies of root data differ only
by conjugation with a torus element; to fix this one may add more data involving the 
choice of positive roots, see \cite[XXIII, Thm.\ 4.1]{SGA3}). 
The reductive group $G$ is semisimple if and only if its root datum is semisimple. Separable
isogenies between connected reductive groups are automatically central and correspond to
isogenies $f$ of root data such that $\coker(f)$ has order invertible in $k$.
If $G$ has root datum $\R = (X,\Phi,Y,\Phi^\vee)$, then the \emph{dual root datum} 
$\R^\vee = (Y,\Phi^\vee,X,\Phi)$ with the obvious pairing and bijection gives rise to the
\emph{dual group} $G^\vee$.

\subsection{Good and very good primes}
Let $\Phi$ be a (reduced) root system with irreducible components
$\Phi_1,\dots, \Phi_t$. Let $\Delta = \Delta_1 \cup \dots \cup \Delta_t$ be a
base of $\Phi$, where each $\Delta_i$ is a base of $\Phi_i$. 
For each $i$, write the highest root $\tilde{\alpha}_i$ of
$\Phi_i$ as a linear combination of simple roots. If a prime number $p$ divides
any coefficient occurring among the expressions of the $\tilde{\alpha}_i$, it
is said to be \emph{bad} (for the root system $\Phi$). Otherwise it is called
\emph{good}. By inspecting the coefficients of the highest roots of irreducible
root systems, one can verify that a prime $p$ is bad if and only if it occurs
among the coefficients of the highest roots, if and only if it is smaller than
the largest coefficient. A prime number $p$ is called \emph{very good} for
$\Phi$, if $p$ is good and if $p$ does not divide $n+1$ for any irreducible
component of type $A_n$ in $\Phi$. 
Write $\Lambda = \{\lambda \in \RR\Phi \mid \langle \lambda, \alpha^\vee \rangle
\in \ZZ \text{ for all } \alpha \in \Phi \}$ for the weight lattice of $\Phi$.
By inspecting the orders of the fundamental
groups of the irreducible root systems one can verify that a prime $p$ is very
good if and only if it is good and does not divide the order of the fundamental
group of $\Phi$. The following more conceptual characterization of these
notions is an easy generalization of \cite[I, 4.1-4.5]{SpSt:1970}.

\begin{lemma} \label{lem:good}
Let $\Phi$ be a reduced root system with weight lattice $\Lambda$. Let $p$ be a
prime number. Then the following hold: \begin{itemize} \item[(a)] $p$ is good
for $\Phi$ if and only if the $p$-torsion of $\ZZ \Phi / \ZZ \Phi'$ vanishes
for all subsets $\Phi' \subseteq \Phi$, \item[(b)] $p$ is very good for $\Phi$
if and only if the $p$-torsion of $\Lambda / \ZZ \Phi'$ vanishes for all
subsets $\Phi' \subseteq \Phi$.  \end{itemize} \end{lemma}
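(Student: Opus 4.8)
The plan is to rephrase both parts in terms of the root lattice modulo the lattices of closed subsystems, to obtain (b) as a formal consequence of (a), and to prove (a) by an induction resting on the classification of maximal closed subsystems.

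First I would set up (a). Since $\ZZ\Phi$, and any subset $\Phi'$ of $\Phi$, decompose over the irreducible components of $\Phi$, and since $p$ is good for $\Phi$ exactly when it is good for each component, I may assume $\Phi$ irreducible. Next, enlarging a subset $\Phi'$ to $\Phi\cap\ZZ\Phi'$ leaves the subgroup $\ZZ\Phi'$ unchanged, and $\Phi\cap\ZZ\Phi'$ is a closed subsystem (it is stable under the reflections in its elements and under those additions that land in $\Phi$); conversely every closed subsystem is itself a subset. Hence the family of subgroups $\ZZ\Phi'$, as $\Phi'$ runs over subsets, coincides with the family $\ZZ\Psi$, as $\Psi$ runs over closed subsystems, and (a) becomes: $p$ is good for the irreducible root system $\Phi$ if and only if $\ZZ\Phi/\ZZ\Psi$ has no $p$-torsion for every closed subsystem $\Psi\subseteq\Phi$.

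The implication ``$p$ bad $\Rightarrow$ some $p$-torsion'' is the easy one: if $p$ is bad it divides a coefficient $c_i$ in the expansion $\tilde\alpha=\sum_j c_j\alpha_j$ of the highest root, and for the subset $\Phi'=\{-\tilde\alpha,\alpha_1,\dots,\alpha_l\}\setminus\{\alpha_i\}$ (deleting the $i$-th node of the extended Dynkin diagram) the affine relation $-\tilde\alpha+\sum_j c_j\alpha_j=0$ gives $\ZZ\Phi/\ZZ\Phi'\cong\ZZ/c_i\ZZ$, which has $p$-torsion. The hard part is the converse, ``$p$ good $\Rightarrow$ no $p$-torsion'', and I would argue it by induction on the number of roots. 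Assume $p$ is good and $\ZZ\Phi/\ZZ\Psi$ has $p$-torsion for some closed $\Psi$; then $\Psi\subsetneq\Phi$, and I can choose a closed subsystem $M$ maximal under inclusion with $\Psi\subseteq M\subsetneq\Phi$, which is then automatically a maximal closed subsystem of $\Phi$. In the short exact sequence $0\to\ZZ M/\ZZ\Psi\to\ZZ\Phi/\ZZ\Psi\to\ZZ\Phi/\ZZ M\to0$ the $p$-torsion sits in one of the outer terms. If it lies in $\ZZ M/\ZZ\Psi$, then $M$ has fewer roots, so by induction $p$ is bad for $M$, hence for $\Phi$ (bad primes do not grow on passing to closed subsystems, as one checks from the classification), contradicting goodness of $p$. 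If it lies in $\ZZ\Phi/\ZZ M$, I would invoke the Borel--de Siebenthal description of the maximal closed subsystems of an irreducible root system: either $M$ arises by deleting a node of prime mark $c$ from the extended Dynkin diagram, so $\ZZ\Phi/\ZZ M\cong\ZZ/c\ZZ$ forces $p=c$, a coefficient of the highest root, making $p$ bad; or $\ZZ M$ is $W$-conjugate to the $\ZZ$-span of a proper subset of a base, a direct summand of $\ZZ\Phi$, so $\ZZ\Phi/\ZZ M$ is free and has no $p$-torsion. Both alternatives are absurd, proving (a). The real obstacle here is not the formal argument but the two classical root-system inputs it consumes — the Borel--de Siebenthal classification and the monotonicity of bad primes under closed subsystems — which is precisely why the statement is only a mild strengthening of \cite[I, 4.1--4.5]{SpSt:1970}.

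Finally (b) follows formally. For any subset $\Phi'$ the inclusions $\ZZ\Phi'\subseteq\ZZ\Phi\subseteq\Lambda$ give a short exact sequence $0\to\ZZ\Phi/\ZZ\Phi'\to\Lambda/\ZZ\Phi'\to\Lambda/\ZZ\Phi\to0$ whose third term is the finite fundamental group $\Lambda/\ZZ\Phi$. If $p$ is very good it is good, so the first term has no $p$-torsion by (a), and $p\nmid|\Lambda/\ZZ\Phi|$, so the third term has none; hence the middle term has no $p$-torsion. Conversely, if $\Lambda/\ZZ\Phi'$ has no $p$-torsion for all $\Phi'$, then the case $\Phi'=\Phi$ gives $p\nmid|\Lambda/\ZZ\Phi|$, while the injection $\ZZ\Phi/\ZZ\Phi'\hookrightarrow\Lambda/\ZZ\Phi'$ shows $\ZZ\Phi/\ZZ\Phi'$ has no $p$-torsion for all $\Phi'$, so $p$ is good by (a); by the characterization of very good primes recalled just before the lemma, $p$ is very good.
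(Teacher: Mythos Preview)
The paper does not actually prove this lemma; it merely states that it is ``an easy generalization of \cite[I, 4.1--4.5]{SpSt:1970}'' and later (Example~\ref{ex:counterexamples}(a)) explicitly notes that no proof was given. Your argument is correct and is precisely the Springer--Steinberg approach the paper points to: reduce to closed subsystems, handle the bad-prime direction by deleting a node of the extended Dynkin diagram, and for the good-prime direction filter through a maximal closed subsystem and invoke Borel--de Siebenthal together with the monotonicity of bad primes under closed subsystems; part~(b) then follows from (a) via the short exact sequence with third term the fundamental group, exactly as you wrote.
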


Now let $G$ be a reductive algebraic group over $k$ and let $\Phi$ be the root
system associated to $G_{\overline{k}}$. Then we call a prime number $p$
\emph{(very) good for $G$} provided that $p$ is (very) good for the root system
$\Phi$. 

\subsection{Pretty good primes for root data}\label{sec:vg}
Let $\R=(X,\Phi,Y,\Phi^\vee)$ be a (reduced) root datum. 
Motivated by Lemma \ref{lem:good}, we introduce the following terminology:

\begin{definition} \label{def:G-vg}
A prime number $p$ is called \emph{pretty good} for the root datum 
$\R$ if the groups
$X/\ZZ\Phi'$ and $Y/\ZZ\Phi'^\vee$ have no $p$-torsion, for all
subsets $\Phi'\subseteq \Phi$.
If $G$ is a connected reductive algebraic group over $k$, 
we say that a prime $p$ is \emph{pretty good for $G$} provided 
that it is pretty good for the root datum associated to
$G_{\overline{k}}$.
\end{definition}

This notion is related to the notions of good and very good primes for
root systems as follows:
\begin{lemma}\label{lem:G-vg}
Let $p$ be a prime number and $\R=(X,\Phi,Y,\Phi^\vee)$ as above.
Then the following hold:
\begin{itemize}
\item[(a)] $p$ is pretty good for $\R$ if and only if $p$ is good for $\Phi$ and the
groups $X/\ZZ\Phi$ and $Y/\ZZ\Phi^\vee$ have no $p$-torsion;
\item[(b)] $p$ is very good for $\Phi$ $\Rightarrow$ $p$ is pretty good for $\R$
$\Rightarrow$ $p$ is good for $\Phi$;
\item[(c)] if $\R$ is semisimple, then $p$ is pretty good for $\R$ if and only if
it is very good for $\Phi$;
\item[(d)] let $f:\R \rightarrow \tilde{\R}=(\tilde{X},\tilde{\Phi},\tilde{Y},
\tilde{\Phi}^\vee)$ be an isogeny of root data; if $\coker(f:X \rightarrow \tilde{X})$
has no $p$-torsion, then $p$ is pretty good for $\R$ if and only if it is pretty
good for $\tilde{\R}$;
\item[(e)] if $\R = \R_1 \oplus \R_2$, then $p$ is pretty good for $\R$ if and only
if it is so for both $\R_1$ and $\R_2$.
\end{itemize}
\end{lemma}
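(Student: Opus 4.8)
The plan is to prove each of the five parts directly from Definition \ref{def:G-vg}, working with the elementary divisor / torsion-subgroup structure of the lattice quotients $X/\ZZ\Phi'$ and $Y/\ZZ\Phi'^\vee$, and using Lemma \ref{lem:good} to bring in the classical good/very good primes.

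For part (a), the point is a transitivity-of-torsion observation: for any $\Phi'\subseteq\Phi$ there is a chain of inclusions of finitely generated abelian groups $\ZZ\Phi' \subseteq \ZZ\Phi \subseteq X$, yielding a short exact sequence $0\to \ZZ\Phi/\ZZ\Phi' \to X/\ZZ\Phi' \to X/\ZZ\Phi \to 0$. If $X/\ZZ\Phi$ and all $\ZZ\Phi/\ZZ\Phi'$ are $p$-torsion-free, then so is $X/\ZZ\Phi'$, since $p$-torsion of the middle term injects into the quotient once one knows the sub is torsion-free; conversely $p$-torsion-freeness of $X/\ZZ\Phi'$ passes to the subgroup $\ZZ\Phi/\ZZ\Phi'$ and, taking $\Phi'=\Phi$, to $X/\ZZ\Phi$. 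By Lemma \ref{lem:good}(a), ``$\ZZ\Phi/\ZZ\Phi'$ has no $p$-torsion for all $\Phi'$'' is exactly ``$p$ good for $\Phi$''. The same argument applies on the coroot side with $Y$, $\Phi^\vee$; note that $\Phi'\mapsto\Phi'^\vee$ is a bijection onto subsets of $\Phi^\vee$ and $p$ is good for $\Phi$ iff it is good for $\Phi^\vee$ (the highest-root coefficients of $\Phi$ and $\Phi^\vee$ involve the same primes), so the two ``good'' conditions coincide. Combining gives (a).

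Parts (b)--(e) then follow quickly. For (b): if $p$ is very good for $\Phi$, then by Lemma \ref{lem:good}(b) $\Lambda/\ZZ\Phi'$ has no $p$-torsion for all $\Phi'$; since $\ZZ\Phi'\subseteq\ZZ\Phi\subseteq X\subseteq\Lambda\otimes\QQ$ and $X\subseteq\Lambda$ (any character lattice sits inside the weight lattice), the subgroup $X/\ZZ\Phi'$ of $\Lambda/\ZZ\Phi'$ is $p$-torsion-free, and dually $Y$ sits inside $\Lambda^\vee$, so $Y/\ZZ\Phi'^\vee$ is $p$-torsion-free too; hence $p$ is pretty good for $\R$. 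The implication ``pretty good $\Rightarrow$ good'' is immediate from (a) (or from taking $\Phi'$ ranging over all subsets). For (c): if $\R$ is semisimple then $\rank X=\rank\ZZ\Phi$, so $\ZZ\Phi$ has finite index in $X$ and in $\Lambda$; the inclusion $X\subseteq\Lambda$ together with $\Lambda/\ZZ\Phi$ finite shows $\Lambda/X$ is finite, and one checks $X/\ZZ\Phi$ is $p$-torsion-free and $\Lambda/\ZZ\Phi$ is $p$-torsion-free are equivalent given $p$ good — actually one argues: pretty good gives $X/\ZZ\Phi$ $p$-torsion-free, but $X/\ZZ\Phi$ is finite in the semisimple case, so it has no $p$-torsion iff $p\nmid|X/\ZZ\Phi|$; similarly with $Y$ and the fundamental group; then invoke the characterization of very good as ``good and $p\nmid|\Lambda/\ZZ\Phi|$'' stated before Lemma \ref{lem:good}, using that $|\Lambda/\ZZ\Phi|=|\ZZ\Phi^\vee\text{-index data}|$ relates the character-lattice and coweight-lattice indices. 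The cleanest route is: in the semisimple case $\Lambda/\ZZ\Phi$ is the fundamental group and $X/\ZZ\Phi$ together with $Y/\ZZ\Phi^\vee$ are dual subquotients whose orders multiply to $|\Lambda/\ZZ\Phi|$, so both are $p$-torsion-free iff $p\nmid|\Lambda/\ZZ\Phi|$, i.e.\ iff $p$ is very good.

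For (d): an isogeny $f:\R\to\tilde\R$ gives $f(\ZZ\Phi')=\ZZ\tilde\Phi'$ for the corresponding subset, and a commutative diagram relating $X/\ZZ\Phi'$ to $\tilde X/\ZZ\tilde\Phi'$ with kernel/cokernel controlled by $\coker(f:X\to\tilde X)$; a diagram chase (snake lemma applied to $0\to\ZZ\Phi'\to X\to X/\ZZ\Phi'\to 0$ mapping to its tilde-version, using $f$ injective and $\ZZ\Phi'\to\ZZ\tilde\Phi'$ an isomorphism) shows $\tilde X/\ZZ\tilde\Phi' \cong (X/\ZZ\Phi')$ modulo a group that is a subquotient of $\coker(f)$; if $\coker(f)$ has no $p$-torsion this transfers $p$-torsion-freeness in both directions, and dually on the $Y$-side using $f^\vee$ with $\coker(f^\vee)$ also $p$-torsion-free (its order divides that of $\coker(f)$). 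Part (e) is just additivity: $X = X_1\oplus X_2$, $\Phi = \Phi_1\sqcup\Phi_2$, every $\Phi'\subseteq\Phi$ splits as $\Phi_1'\sqcup\Phi_2'$, and $X/\ZZ\Phi' \cong X_1/\ZZ\Phi_1' \oplus X_2/\ZZ\Phi_2'$, whose $p$-torsion vanishes iff each summand's does; same for $Y$.

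I expect the only mildly delicate point to be part (c) — specifically, keeping straight the relationship between $|X/\ZZ\Phi|$, $|Y/\ZZ\Phi^\vee|$, and the order of the fundamental group $|\Lambda/\ZZ\Phi|$ in the semisimple case (these indices satisfy $|\Lambda/\ZZ\Phi| = |\Lambda/X|\cdot|X/\ZZ\Phi|$ and, by duality of the perfect pairing, $|\Lambda/X| = |Y/\ZZ\Phi^\vee|$, so the product of the two relevant indices equals $|\Lambda/\ZZ\Phi|$) — but this is standard structure theory of semisimple root data and not a real obstacle. Everything else is routine homological algebra of finitely generated abelian groups.
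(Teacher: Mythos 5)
Your parts (a), (d) and (e) are correct and follow essentially the same route as the paper (the short exact sequence $0 \to \ZZ\Phi/\ZZ\Phi' \to X/\ZZ\Phi' \to X/\ZZ\Phi \to 0$ for (a), a cokernel comparison for (d), additivity for (e)), and your eventual argument for (c) — $\lvert\Lambda/\ZZ\Phi\rvert = \lvert\Lambda/X\rvert\cdot\lvert X/\ZZ\Phi\rvert$ with $\Lambda/X \cong Y/\ZZ\Phi^\vee$ by duality — is the paper's. The genuine gap is in part (b). You deduce that $X/\ZZ\Phi'$ has no $p$-torsion by embedding it into $\Lambda/\ZZ\Phi'$, on the grounds that ``any character lattice sits inside the weight lattice.'' That containment fails for a general (non-semisimple) root datum: with the paper's definition, $\Lambda$ is a lattice inside $\RR\Phi$, whereas $X$ typically has rank strictly larger than $\rank\ZZ\Phi$ and is not contained in $\RR\Phi$. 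For $\GL_n$, for instance, $X=\ZZ^n$ while $\Lambda$ has rank $n-1$, so neither $X\subseteq\Lambda$ nor $X\subseteq\Lambda\otimes\QQ$ holds. Since (b) is asserted for arbitrary root data and feeds into (c), this is a real gap, not a notational slip.

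The repair is short and is exactly what the paper does. The subgroup $X\cap\QQ\Phi$ of $X$ \emph{is} contained in $\Lambda$ (its elements lie in $\RR\Phi$ and pair integrally with all coroots), so $(X\cap\QQ\Phi)/\ZZ\Phi$ is a subgroup of $\Lambda/\ZZ\Phi$ and has no $p$-torsion when $p$ is very good, by Lemma \ref{lem:good}(b). On the other hand $X/(X\cap\QQ\Phi)$ embeds into the $\QQ$-vector space $(X\otimes\QQ)/\QQ\Phi$ and is therefore torsion-free, so all torsion of $X/\ZZ\Phi$ already lies in $(X\cap\QQ\Phi)/\ZZ\Phi$; hence $X/\ZZ\Phi$ has no $p$-torsion. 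The dual argument handles $Y/\ZZ\Phi^\vee$, and part (a) (which reduces pretty goodness to the single quotients $X/\ZZ\Phi$ and $Y/\ZZ\Phi^\vee$ plus goodness) then yields (b). Note also that once (b) is proved this way, your proof of (c) is unaffected, since in the semisimple case the containment $X\subseteq\Lambda$ you wanted is actually valid.
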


\begin{proof}
(a). For a subset $\Phi' \subseteq \Phi$ we consider the exact sequence
$$0 \rightarrow \ZZ\Phi / \ZZ\Phi' \rightarrow X/\ZZ\Phi' \rightarrow 
X/\ZZ\Phi \rightarrow 0.$$
If $p$ is pretty good, then the $p$-torsion of the middle group in this sequence
vanishes, so $p$ is good by Lemma \ref{lem:good} (a).
This proves the forward implication of (a).
For the reverse implication, the above exact sequence
and its obvious dual version
imply that the $p$-torsion
of $X/\ZZ\Phi'$ (resp.\ $Y/\ZZ\Phi'^\vee$) coincide with the $p$-torsion
of $\ZZ\Phi / \ZZ\Phi'$ (resp.\ $\ZZ\Phi^\vee / \ZZ\Phi'^\vee$). Since $p$ is good
for $\Phi$ and $\Phi^\vee$, the assertion follows again
from Lemma \ref{lem:good} (a).

(b). According to (a) it remains to show: if $p$ is very good for $\Phi$, then
$X/\ZZ\Phi$ and $Y/\ZZ\Phi^\vee$ have no $p$-torsion. 
For the first group, note that $(X \cap \QQ\Phi) / \ZZ\Phi$
has no $p$-torsion according to Lemma \ref{lem:good}
(b), since it is a subgroup of $\Lambda / \ZZ\Phi$. 
But then the same is true for $X/\ZZ\Phi$, since the quotient $X/(X\cap \QQ\Phi)$
has no torsion at all.
Since $p$ is also very good for $\Phi^\vee$, a
similar argument proves the assertion for $Y/\ZZ\Phi^\vee$.

(c). Suppose that $\R$ is a semisimple root datum.
According to (b) it remains to show that a pretty good prime $p$ for $\R$ 
does not divide the order of the fundamental group $\Lambda/\ZZ\Phi$.
In the semisimple setting,
the character lattice is a sublattice of the weight lattice, and 
the inclusion
$X \rightarrow \Lambda$ is dual to the inclusion $\ZZ\Phi^\vee \rightarrow Y$.
So both
$\Lambda / X$ and $X/\ZZ\Phi$ have no $p$-torsion by assumption,
and the assertion of (c) follows. 

(d). The condition on $f$ guarantees that the $p$-torsion of $X/\ZZ\Phi'$ coincides
with the $p$-torsion of $\tilde{X}/\ZZ f(\Phi')$, and similarly for $f^\vee$, whose
cokernel satisfies the same condition as the one of $f$.

(e). Suppose $\R_i = (X_i,\Phi_i,Y_i,\Phi_i^\vee)$ for $i=1,2$. 
Then any subset $\Phi' \subset \Phi = \Phi_1 \cup \Phi_2$ decomposes as
$\Phi' = \Phi'_1 \cup \Phi'_2$ with $\Phi'_i = \Phi' \cap \Phi_i$,
and $X / \ZZ\Phi' \cong X_1/ \ZZ\Phi'_1 \oplus X_2 /\ZZ\Phi'_2$. A similar decomposition
holds for $Y / \ZZ\Phi'^\vee$, which finishes the proof. 
\end{proof}

\begin{example}
The notion of a pretty good prime differs from the notions of good and very good primes.
For instance, every prime is pretty good for $\GL_2$, whereas $p=2$ is not very good for $\GL_2$.
On the other hand, all primes are good for $\SL_2$ and $\PSL_2$, whereas $p=2$ is not pretty good for $\SL_2$
or $\PSL_2$.
\end{example}

\section{Smooth centralizers}\label{sec:smoothcent}

Let $k$ be a field and $\overline{k}$ an algebraic closure of $k$. Note that
all constructions (preimages, centralizers, kernels, etc.) will be taken in the
scheme-theoretic sense of Section \ref{sec:prelim}.

\subsection{Relation to separability}\label{sec:sep} 
Suppose for the moment that $G$ is an algebraic group over the algebraically
closed field $\overline{k}$ of positive characteristic with Lie algebra $\g$.
This is the situation
considered in \cite{BMRT:2010}. 
There, a smooth closed subgroup $H\subseteq G$ is called \emph{separable in $G$}
if $\dim C_{G(\overline{k})}(H(\overline{k})) = \dim_{\overline{k}}
\g^{H(\overline{k})}$, where we consider the centralizer to be taken in the 
category of linear algebraic groups.
Similarly, a Lie subalgebra $\h \subseteq \g$ is called
\emph{separable in $\g$} if
$\dim \Cent_{G(\overline{k})}(\h) = \dim_{\overline{k}}
\cent_{\g}(\h)$.
Again we consider here the centralizer as a linear algebraic group, whereas
$\cent_{\g}({\h})$ is the Lie algebra centralizer of $\h$ in $\g$.

The next lemma explains the assertion of
\cite[Rem.\ 3.5(vi)]{BMRT:2010}, that relates the notion of separability to
the smoothness of certain centralizers of subgroup schemes. We use the notation
and equivalence of categories of Section \ref{subsec:smoothness}.
\begin{lemma} \label{lem:sep=smooth} Let $G$ be an algebraic group over
$\overline{k}$ with Lie algebra $\g$. Let $H$ be a smooth subgroup scheme 
of $G$ and let $\h \subseteq \g$ be a subalgebra.  
Then the following hold:
\begin{itemize} 
\item[(i)] The
centralizer $C_G(H)$ is smooth if and only if $H(\overline{k})$ is separable in
$G(\overline{k})$.  
\item[(ii)] Let $\h'$ be the $p$-envelope of $\h$ in $\g$,
i.e.\ the smallest $p$-subalgebra of $\g$ that contains $\h$. Let
$H'=\mathbb{G}(\h')$ be the affine group scheme associated to $\h'$ as in
Section \ref{subsec:tangent}. Then $C_G(H')$ is smooth if and only if $\h$ is
separable in $\g$.
\end{itemize} 
\end{lemma}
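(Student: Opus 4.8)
The plan is to deduce both statements from Proposition~\ref{prop:smoothness}(c) together with the identity $\Lie(C_G(K)) = \g^K$ of \eqref{eq:lie_cent}: for any closed subgroup scheme $K \subseteq G$ one has $\dim_{\overline k} \g^K = \dim_{\overline k}\Lie(C_G(K)) \ge \dim C_G(K)$, with equality if and only if $C_G(K)$ is smooth. So in each part it suffices to re-express $\g^K$ and $\dim C_G(K)$ in the classical terms appearing in the definition of separability. I will also use that, since $\overline k$ is algebraically closed, the dimension of an affine algebraic $\overline k$-group scheme agrees with that of its underlying variety of closed points; in particular $\dim C_G(K) = \dim C_G(K)(\overline k)$, the right-hand side read as a linear algebraic group.

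For (i) take $K = H$. Since $H$ is smooth, $\overline k[H]$ is reduced, hence (being Jacobson) embeds into the product of its residue fields at the closed points; tensoring this embedding with the finite-dimensional space $\g$ and evaluating the $\Ad$-coaction at these points identifies the invariant subspace $\g^H$ with $\g^{H(\overline k)}$. Likewise, by \eqref{eq:cent1} an element $g \in G(\overline k)$ lies in $C_G(H)(\overline k)$ precisely when the morphisms $\Int(g)$ and $i$ agree on $H$; as $H$ is reduced with $H(\overline k)$ dense and $G$ is separated, this holds if and only if $g$ centralizes $H(\overline k)$, so $C_G(H)(\overline k) = C_{G(\overline k)}(H(\overline k))$. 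Substituting $\dim_{\overline k}\g^H = \dim_{\overline k}\g^{H(\overline k)}$ and $\dim C_G(H) = \dim C_{G(\overline k)}(H(\overline k))$ into the equality criterion above turns ``$C_G(H)$ is smooth'' into exactly the defining equation of separability of $H(\overline k)$ in $G(\overline k)$.

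For (ii) take $K = H' = \mathbb{G}(\h')$, which by Section~\ref{subsec:tangent} is a closed height-one subgroup scheme of $G$ with $\Lie(H') = \h'$. Under the equivalence between height-one group schemes and finite-dimensional restricted Lie algebras, $\g$ becomes a restricted $\h'$-module with $x$ acting by $\operatorname{ad}(x)$, and $H'$-invariants are exactly the $\h'$-annihilated vectors, so $\g^{H'} = \cent_\g(\h')$; moreover $\cent_\g(\h') = \cent_\g(\h)$, since $\operatorname{ad}$-annihilation passes from $\h$ to the restricted subalgebra it generates, using $\operatorname{ad}(x^{[p]}) = \operatorname{ad}(x)^p$ and the Jacobi identity. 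On the group side, \eqref{eq:lie-adjoint} says a morphism out of the height-one group $H'$ is determined by its differential, so $\Int(g)$ and $i$ agree on $H'$ if and only if $\Ad(g)$ restricts to the identity on $\h' = \Lie(H')$; because $\Ad(g)$ is an automorphism of the restricted Lie algebra $\g$, this in turn holds if and only if $\Ad(g)$ fixes $\h$ pointwise, whence $C_G(H')(\overline k) = \Cent_{G(\overline k)}(\h)$. Feeding $\dim_{\overline k}\g^{H'} = \dim_{\overline k}\cent_\g(\h)$ and $\dim C_G(H') = \dim \Cent_{G(\overline k)}(\h)$ into the criterion yields the defining equation of separability of $\h$ in $\g$.

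The substantial inputs, Proposition~\ref{prop:smoothness} and \eqref{eq:lie_cent}, are already available, so the work lies entirely in the two ``scheme $\leftrightarrow$ classical'' translations. The translations in (i) need only that $H$ is reduced over an algebraically closed field, so I expect the only point requiring genuine care to be in (ii): verifying that replacing $\h$ by its $p$-envelope $\h'$ changes neither the Lie-algebra centralizer $\cent_\g(-)$ nor the group centralizer $\Cent_{G(\overline k)}(-)$, which is precisely where the compatibility of $\Ad$ and $\operatorname{ad}$ with the $p$-operation is used.
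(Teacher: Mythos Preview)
Your proposal is correct and follows essentially the same approach as the paper: both arguments reduce the smoothness question to Proposition~\ref{prop:smoothness}(c) via the identity $\Lie(C_G(K))=\g^K$, and then translate $\g^K$ and $\dim C_G(K)$ into the classical quantities using reducedness of $H$ in~(i) and the height-one/restricted-Lie-algebra correspondence \eqref{eq:lie-adjoint} in~(ii). The only organizational difference is that in~(ii) the paper passes through the intermediate scheme $\Cent_G(\h')$ and proves $\Cent_G(\h')=C_G(H')$ as group schemes, whereas you work directly with $C_G(H')$ and compute its $\overline{k}$-points and Lie algebra; both routes use the same inputs (compatibility of $\Ad$ and $\operatorname{ad}$ with the $p$-operation) and arrive at the same identifications $\cent_\g(\h)=\cent_\g(\h')$ and $\Cent_{G(\overline k)}(\h)=\Cent_{G(\overline k)}(\h')$.
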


\begin{proof}
(i). Let $i: H \rightarrow G$ be the inclusion. The smoothness of $H$ implies
that for $g \in G(\overline{k})$, we have $\Int(g) = i: H \rightarrow G$ if and
only if $\Int(g)(\overline{k}) = i(\overline{k})$ (see \cite[Ex.\ 2.9 in
3.2]{Liu:2002}). Therefore, according to \eqref{eq:cent1}, we get the identity
\begin{align*} C_G(H)(\overline{k}) = C_{G(\overline{k})}(H(\overline{k})).
\end{align*} This implies that the dimensions of the scheme $C_G(H)$ and the
linear algebraic group $C_{G(\overline{k})}(H(\overline{k}))$ coincide (because
the defining ideal of the linear algebraic group now proves to be the radical
of the ideal defining $C_G(H)$). By \eqref{eq:lie_cent}, we also have 
that $\Lie(C_G(H)) =\g^H=\g^{H(\overline{k})}$ (for the last equation we note
that both subsets of $\g$ can be defined in terms of the same comodule map).
We conclude by Proposition \ref{prop:smoothness} that $C_G(H)$ is smooth if and
only if $\dim C_G(H) = \dim_{\overline{k}} \g^{H(\overline{k})}$, if and only
if $\dim C_{G(\overline{k})}(H(\overline{k})) = \dim_{\overline{k}}
\g^{H(\overline{k})}$. The last equality is equivalent to the separability of
$H(\overline{k})$ in $G(\overline{k})$.

(ii). As in part (i) we first get the identity $\Cent_G(\h)(\overline{k}) =
\Cent_{G(\overline{k})}(\h)$. We also have the identity $\Lie(\Cent_G(\h)) =
\cent_{\g}(\h)$ and can therefore conclude in the same manner as in part (i)
that $\h$ is separable in $\g$ if and only if $\Cent_G(\h)$ is smooth. Since
the actions of $G(\overline{k})$ and $\g$ on $\g$ are compatible with the
$p$-mapping, we get that $\h$ is separable in $\g$ if and only $\h'$ is
separable in $\g$ (in fact, $\cent_\g(\h)=\cent_\g(\h')$ and
$\Cent_{G(\overline{k})}(\h)=\Cent_{G(\overline{k})}(\h')$). It now suffices
to show that $\Cent_G(\h') = C_G(H')$. But for $g \in G(R)$ we have that
$\Ad(g) = di_R:\Lie(H'_R) \rightarrow \Lie(G_R)$ if and only if
$\Int(g)=i_R:H'_R \rightarrow G_R$ (see \eqref{eq:lie-adjoint}). So the desired
equality follows from the identities \eqref{eq:cent1} and \eqref{eq:cent2}.
\end{proof}

\subsection{Sufficient conditions for the smoothness of centralizers}
\label{sec:necsm}

A sufficient condition for the smoothness of fixed points is given by the
following theorem (see \cite[II, \S5, 2.8]{DG:1970}):

\begin{thm} \label{thm:lissite}
Let $G$ and $H$ be affine algebraic $k$-group schemes. Suppose that $G$ is
smooth and that $H$ acts via group automorphisms on $G$. If $H^1(H,\g)=0$ for
the corresponding $H$-module structure on $\g$, then $G^H$ is a smooth affine
algebraic group scheme. In particular, $G^H$ is smooth if $H$ is linearly
reductive.  
\end{thm}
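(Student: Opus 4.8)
The plan is to deduce this from the standard Grothendieck-style criterion for smoothness of fixed point schemes, which says that $G^H$ is smooth provided $G$ is smooth and the relevant cohomology obstruction vanishes. Concretely, I would first reduce to showing that $e \in G^H(k)$ is a regular point (Proposition \ref{prop:smoothness}(b)), so that everything becomes a statement about tangent spaces and dimensions. Since $G$ is smooth, $e$ is a regular point of $G$ with $T_e G = \g$; by Lemma \ref{lem:tangent-space}(b) we have $T_e(G^H) = \g^H$, so I need to compare $\dim_k \g^H$ with $\dim G^H$.

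Next I would bring in deformation theory. The idea is to show that $G^H$ is formally smooth at $e$ by lifting points along a square-zero extension $R' \twoheadrightarrow R$ of $k$-algebras: given a point of $G^H$ over $R$, it lifts to a point of $G$ over $R'$ (by smoothness of $G$), and the obstruction to choosing the lift to be $H$-fixed lives in $H^1(H, \g \otimes I)$, where $I$ is the square-zero ideal. The hypothesis $H^1(H,\g) = 0$ kills this obstruction class. One then needs that $\g \otimes I$ has vanishing $H^1$ as well, which follows because $H^1$ commutes with the relevant flat base change / direct sums of copies of $\g$ (as $I$ is a $k$-module). Running this argument with $R' = k[\epsilon]$ and $R = k$ shows the tangent-space-to-dimension comparison needed; more precisely, the cocycle computation identifies $\g^H$ with the lifts, and a counting argument via Lemma \ref{lem:tangent-space} and the dimension of $G^H$ gives $\dim_k \g^H = \dim G^H$.

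The main obstacle — or rather the one technical point that needs care — is making the obstruction-theoretic argument precise in the functorial (scheme-theoretic) language of the paper, namely identifying the set of $H$-fixed lifts of a given $R$-point with a torsor (or its obstruction with an element) under the cohomology group $H^1(H, \g\otimes I)$, and verifying that this $H$-cohomology group really does vanish given $H^1(H,\g)=0$. This is exactly the content of \cite[II, \S5, 2.8]{DG:1970}, so for the write-up I would simply cite that reference and spell out only the translation into our notation.

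For the final sentence: if $H$ is linearly reductive, then by definition all $H$-modules are semisimple, equivalently $H^1(H,V) = 0$ for every finite-dimensional $H$-module $V$ (as recalled in Section on reductive groups). Since $\g = \Lie(G)$ is a finite-dimensional $H$-module, $H^1(H,\g) = 0$, and the first part applies to give smoothness of $G^H$.
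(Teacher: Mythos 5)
Your proposal is correct and follows the same route as the paper, which states this result with only a citation to \cite[II, \S5, 2.8]{DG:1970}; your sketch of the obstruction-theoretic argument behind that reference (lifting $H$-fixed points along square-zero extensions, with the obstruction in $H^1(H,\g\otimes I)$, which vanishes since $H^1$ commutes with $-\otimes_k I$ over a field) is the right content, and the deduction for linearly reductive $H$ matches the paper's definition. The only minor imprecision is that the single extension $k[\epsilon]\twoheadrightarrow k$ alone gives just $T_e(G^H)=\g^H$; the dimension equality $\dim_k\g^H=\dim G^H$ requires lifting over all artinian square-zero extensions (i.e.\ formal smoothness at $e$), which is exactly what the cited result supplies.
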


Our goal is to prove the following theorem:

\begin{thm} \label{thm:main}
Let $G$ be a reductive algebraic group over $k$.
Suppose that the characteristic of $k$ is zero or pretty good for 
$G^\circ$. Suppose further that the component
group $\pi_0 G(\overline{k})$ has order invertible in $k$. Then for any closed
subgroup scheme $H\subseteq G$, the centralizer $C_G(H)$ is smooth. In
particular, all centralizers of closed subgroup schemes are smooth for a
connected reductive group $G$ in pretty good characteristic.  
\end{thm}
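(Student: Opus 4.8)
The plan is to reduce the statement to an application of Theorem~\ref{thm:lissite}, i.e.\ to show that after suitable modifications one may always arrange that the acting group scheme is linearly reductive, so that $H^1$ vanishes. The first reduction is to pass to the algebraic closure: by \eqref{eq:cent-base} and Proposition~\ref{prop:smoothness}(d) (or (c)), smoothness of $C_G(H)$ may be checked after base change to $\overline{k}$, so we may assume $k=\overline{k}$. Next I would reduce from $G$ to $G^\circ$: since $\pi_0 G(\overline{k})$ has order invertible in $k$, the component group is linearly reductive (a constant finite group scheme of order invertible in $k$ is linearly reductive), and one can try to leverage this together with the hypothesis that $\cha k$ is pretty good for $G^\circ$. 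The honest reduction here is a little delicate because $H$ need not be contained in $G^\circ$; the cleanest route is probably to first treat $G$ connected, and then handle the component group separately at the end using Theorem~\ref{thm:lissite} with the linearly reductive group $\pi_0 G$ acting, or a dévissage along $1\to C_{G^\circ}(H)\to C_G(H)\to(\text{finite})$.

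For $G$ connected reductive in pretty good characteristic, the strategy is a dévissage on $H$ by its infinitesimal part. Write $H^\circ_{\mathrm{red}}$ (or rather: the situation splits into the height-one infinitesimal piece $H_1 = H \cap$ (first Frobenius kernel of $G$) and a smooth quotient). One handles the two cases provided by \cite[Rem.\ 3.5(vi)]{BMRT:2010} and Lemma~\ref{lem:sep=smooth}: (a) $H$ smooth, and (b) $H$ infinitesimal of height one, coming from a restricted subalgebra $\h\subseteq\g$. In case (b), by Lemma~\ref{lem:sep=smooth}(ii) smoothness of $C_G(H)$ is equivalent to separability of $\h$ in $\g$, and the key input is that in pretty good characteristic $\g$ admits a non-degenerate $G$-invariant bilinear form on a complement—more precisely, pretty good characteristic is exactly the condition ensuring $(G',G)$-type reductive-pair arguments / the existence of the invariant form needed to run the Liebeck--Seitz/Lawther--Testerman/BMRT separability argument. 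I would want to invoke, or reprove in the pretty-good setting, the statement that $\Lie(G)$ is a "nice" $G$-module: the pretty good hypothesis (Definition~\ref{def:G-vg}, via Lemma~\ref{lem:G-vg}) is precisely what makes the weight-lattice quotients $p$-torsion-free, which is what guarantees the relevant trace form is non-degenerate and hence that every subgroup/subalgebra is separable.

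Assembling: given an arbitrary closed subgroup scheme $H\subseteq G$ with $G$ connected reductive in pretty good characteristic, I would use the exact sequence $1\to H_1 \to H \to \bar H \to 1$ where $H_1$ is the (height-one, infinitesimal) intersection of $H$ with the Frobenius kernel and $\bar H$ is smooth, or iterate Frobenius kernels to peel off height-one layers. At each stage, $C_G(-)$ of a height-one group scheme is smooth by the separability result, and $C_G(-)$ of a smooth group scheme is smooth again by the BMRT theorem in pretty good (a fortiori very good is too strong—one must re-examine their proof to confirm it only needs pretty good). Intersecting/iterating centralizers and checking that smoothness is preserved under the relevant extensions (using $\dim G=\dim N+\dim H$ for exact sequences, and Proposition~\ref{prop:smoothness}) finishes the connected case; the component-group step above then gives the general statement. \emph{The main obstacle} I anticipate is the height-one case: verifying that the separability arguments of \cite{BMRT:2010}, which are stated under the "very good" hypothesis, actually go through verbatim under the weaker "pretty good" hypothesis—this requires re-examining where very-goodness (as opposed to pretty-goodness) is genuinely used, essentially tracking the non-degeneracy of the invariant form on $\g$ and the behaviour of centralizers of semisimple and unipotent/nilpotent elements.
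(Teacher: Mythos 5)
There is a genuine gap: the core of the argument---smoothness of $C_G(H)$ for an \emph{arbitrary} closed subgroup scheme $H$ of a \emph{connected} reductive $G$ in pretty good characteristic---is not actually carried out, and each of the strategies you sketch for it fails at the decisive point. First, the opening plan of ``arranging that the acting group is linearly reductive so that Theorem~\ref{thm:lissite} applies'' is a dead end: Remark~\ref{rem:cohom}(b) exhibits $H=\ZZ/3\ZZ\subseteq\GL_2$ in characteristic $3$ (pretty good for $\GL_2$) with $H^1(H,\gl_2)\neq 0$, so no such arrangement is possible in general. Second, the d\'evissage of $H$ into a height-one infinitesimal piece and a smooth quotient does not close up as an induction: an arbitrary subgroup scheme is not an extension of a smooth group by a height-one one (its infinitesimal part can have any height), and even if you peel off a layer $H_1$ and pass to $C_G(H_1)$, that group is no longer reductive, so neither Lemma~\ref{lem:sep=smooth} nor any separability theorem for reductive ambient groups applies at the next stage. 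Handling arbitrary $H$ without any d\'evissage is precisely what the theorem adds over \cite[Rem.\ 3.5(vi)]{BMRT:2010}. Third, the ``key input'' you want---that pretty goodness forces a non-degenerate invariant form on $\g$ so that the BMRT argument runs verbatim---is exactly the unproved hard step; it is false as stated (the type-$A$ factors with $p\mid n+1$, e.g.\ $\GL_p$ in characteristic $p$, are where very goodness fails but pretty goodness can hold, and there $\Lie(\SL_p)$ is not even simple), and you explicitly defer it as ``the main obstacle'' rather than resolving it.

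The paper's actual mechanism is different and worth contrasting. Smoothness is imported from $\GL_n$, where \emph{all} centralizers of arbitrary subgroup schemes are smooth with no hypothesis on $p$ (Lemma~\ref{lem:caseGL}: $(\gl_n)_a^H$ is represented by a reduced algebra and $C_{\GL_n}(H)$ is its localization at $\det$). It is then transported by two descent devices that work uniformly in $H$: descent along reductive pairs (Lemma~\ref{lem:red-pair}, a tangent-space/quotient-scheme argument) and invariance under central separable isogenies (Lemma~\ref{lem:sep-isogeny}). The pretty good hypothesis enters only through a root-datum computation: $G$ is split by a separable central isogeny into a very-good part (handled via Richardson's reductive pairs $(\GL_m,G_i')$) and a type-$A$-with-radical part $\tilde G$, and for $\tilde G$ one constructs, entirely in terms of root data over $\CC$ and elementary divisors, a chain of separable isogenies linking it to $\GL_{m_1+1}\times\dots\times\GL_{m_n+1}\times\Gm^{r-n}$. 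Your reductions to $\overline k$ and the treatment of the component group via linear reductivity do match the paper, but without the $\GL_n$ lemma, the two descent lemmas, and the type-$A$ isogeny construction, the proposal does not constitute a proof.
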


We defer the proof of the theorem to the end of this section. It relies
on a series of lemmas which generalize ideas of \cite{BMRT:2010} to the
scheme-theoretic setup. The main steps of the proof then follow closely the
proof of \cite[Thm.\ 1.2]{BMRT:2010}, plus a careful type $A$ analysis. 

\begin{remarks} \label{rem:cohom}

(a). Theorem \ref{thm:main} in particular answers a question raised in \cite[Rem.\ 3.5(vi)]{BMRT:2010},
attributed to Serre:
Even the centralizers of \emph{non-smooth} subgroup schemes of a connected reductive group $G$ are smooth
in very good characteristic.

(b). The cohomology criterion of Theorem \ref{thm:lissite} does not apply in
the situation of Theorem \ref{thm:main}. For example, take $k$ to be a field of
characteristic $3$. Let $H$ be the constant $k$-group scheme $\ZZ/3\ZZ$, realized
as a closed subgroup scheme of $\GL_2$ via 
\begin{align*} H(R) = \left \{
\begin{pmatrix} 1 & r\\ 0 & 1 \end{pmatrix} \Big\vert ~ r^3=r \right \}
\end{align*}
for each $k$-algebra $R$. Then $H^1(H,\gl_2)$ is one-dimensional as a
$k$-vector space (which can be checked by computing the cohomology of the
cyclic group of order $3$ with the corresponding action on $k^4$).
So the cohomology criterion does not apply for $\GL_2$, whereas all primes are
pretty good for
$\GL_2$.

(c). As pointed out to me by Steve Donkin, there is the following more generic
way to produce examples of non-vanishing cohomology on some $\gl_n$ as above.
Take any affine algebraic group scheme $G$ that is not linearly reductive. Then
there exist $G$-modules $M_1, M_2$ such that $\Ext^1_G(M_1,M_2)$ is not
trivial. Setting $V = M_1 \oplus M_2$, we find that $H^1(G, \gl(V)) = H^1(G,
V\otimes V^*) \supseteq H^1(G, M_2 \otimes M_1^*) \cong \Ext^1_G(k,M_2 \otimes
M_1^*) \cong \Ext^1_G(M_1,M_2)\neq 0$. If $V$ also happens to be a faithful
$G$-module (this can be arranged for example for any simple adjoint algebraic
group $G$ in positive characteristic), then we can identify $G$ with its image
in $\GL(V)$. This yields again an example where the theorem implies the
smoothness of the centralizer $C_{\GL(V)}(G)$, whereas the cohomology criterion
does not apply.

(d). The following example (\cite[Rem.\ 3.5]{BMRT:2010}) shows that the
assertion of the theorem may fail for non-connected reductive groups without
the hypothesis on the component group. Take $G = \Gm \rtimes \Aut(\Gm) = \Gm
\rtimes \ZZ/2\ZZ$. Then $C_G(\ZZ/2\ZZ) = \mu_2 \rtimes \ZZ/2\ZZ$ is smooth if
and only if $\cha(k)\neq 2$.

\end{remarks}

The following three lemmas are used in the proof of Theorem \ref{thm:main}. The
key idea is to transfer the desired property from $\GL_n$, where all
centralizers are smooth, according to the following first lemma.

\begin{lemma} \label{lem:caseGL}
Let $H$ be a closed subgroup scheme of $\GL_n$. Then the centralizer
$C_{\GL_n}(H)$ is smooth.
\end{lemma}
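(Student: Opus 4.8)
The plan is to exploit the well-known description of the scheme-theoretic centralizer of an element, and more generally of a subgroup scheme, of $\GL_n$ as the unit group of an associative algebra. Concretely, let $V = k^n$ be the standard module and regard $H$ as acting on $V$ via the inclusion $H \hookrightarrow \GL_n = \GL(V)$. For any $k$-algebra $R$, an element $g \in \GL_n(R) = \GL(V \otimes R)$ lies in $C_{\GL_n}(H)(R)$ precisely when $g$ commutes with the action of $H_{R'}$ on $V \otimes R'$ for every $R \to R'$; equivalently, $g$ is an $R'$-linear $H_{R'}$-module automorphism of $V \otimes R'$ for all such $R'$. The key structural observation is that the functor $R \mapsto \End_{H_R}(V \otimes R)$ is represented by a finitely generated $k$-algebra — indeed by a quotient of the polynomial ring $k[\End(V)] = k[x_{ij}]$ cut out by the \emph{linear} equations expressing $H$-equivariance (one reads these off from the comodule map $V \to V \otimes k[H]$). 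Call this linear subspace of $\End(V)$ by the name $A$; then $A$ is a unital associative $k$-algebra (closed under composition and containing the identity), and $C_{\GL_n}(H) = A^\times$, the unit group of $A$, as a functor on $k$-algebras.

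First I would make precise that $A_a$ (the affine scheme attached to the vector space $A$ as in Section~\ref{subsec:basic}) together with its composition product is an associative $k$-algebra scheme, and that its unit group — the functor $R \mapsto (A \otimes R)^\times$ — is an open subscheme of $A_a$, hence in particular smooth: it is the non-vanishing locus of the single function $\det \in k[A_a]$ obtained by restricting the determinant on $\End(V)$ to $A$. (That $(A \otimes R)^\times$ is exactly the fiber of the multiplication map over $1$, or alternatively the locus where a chosen polynomial is invertible, is the standard argument that $\GL_n$ is open in $\mathbb{A}^{n^2}$, carried out inside the subalgebra $A$.) Since a Zariski-open subscheme of the affine space $A_a$ is smooth, and $C_{\GL_n}(H)$ is identified with this open subscheme, smoothness follows from Proposition~\ref{prop:smoothness}.

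The one point that needs care — and the place I expect the only real friction — is the identification $C_{\GL_n}(H) = A^\times$ at the level of functors, i.e.\ checking that the scheme-theoretic centralizer really is the unit group of the \emph{linearly} defined algebra $A$, rather than of some a priori larger thing defined by nonlinear equations. Here one uses that $H$-equivariance of an endomorphism of $V \otimes R$ is expressed by equations that are $R$-linear in the matrix entries, because the $H$-comodule structure on $V$ base-changes compatibly: the condition "$g$ commutes with $H_{R'}$ for all $R \to R'$" reduces, by the usual Yoneda-type argument, to a single condition over the universal $R'$, namely commutativity with the generic point of $H$, which is a finite system of linear equations over $k$ tensored up to $R$. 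Thus $C_{\GL_n}(H)(R) = \{ g \in (A \otimes R) : g \text{ invertible}\} = (A\otimes R)^\times$, and $k[C_{\GL_n}(H)] = k[A_a]_{\det}$ is the localization of a polynomial ring, which is regular. This completes the proof; no hypothesis on $\cha k$ is needed, which is exactly why $\GL_n$ serves as the smooth base case from which Theorem~\ref{thm:main} will be bootstrapped.
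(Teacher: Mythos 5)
Your proposal is correct and is essentially the paper's own argument: both identify $C_{\GL_n}(H)$ with the locus where $\det$ is invertible inside the linear subspace $\gl_n^H=\End_H(V)$ of $\End(V)$ (using that the fixed-point functor of the linear $H$-action is $(\gl_n^H)_a$, i.e.\ an affine space), so that the centralizer is represented by a localization of a polynomial ring, hence is smooth. The only cosmetic differences are that the paper first reduces to $k=\overline{k}$ and concludes via reducedness of the coordinate ring, while you argue via openness in $A_a$ and phrase the result as the unit-group scheme of the centralizer algebra.
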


\begin{proof}
According to \eqref{eq:cent-base} and Proposition
\ref{prop:smoothness}, we can assume that $k=\overline{k}$ is algebraically
closed.
We have the identity $(\gl_n^H)_a = ((\gl_n)_a)^H$ (see \cite[II, \S2,
1.6]{DG:1970}). In particular, $((\gl_n)_a)^H$ can be represented by a reduced
$k$-algebra (see Section \ref{subsec:basic}). Let $R$ be a $k$-algebra and let
us identify $\gl_n \otimes  R= M_{n\times n}(R)$, where the right hand side
denotes $n\times n$-matrices with coefficients in $R$. We can write
\begin{align*} ((\gl_n)_a)^H (R) &= \{ x \in M_{n\times n}(R) \mid \Ad(h) (x_r)
= x_r \text{ for every }r:R \rightarrow R' \text{ and } h \in H(R')\} \\ &= \{
x \in M_{n\times n}(R) \mid  hx_rh^{-1} = x_r \text{ for every }r:R \rightarrow
R' \text{ and } h \in H(R')\},\\ C_{\GL_n}(H) (R) &= \{ g \in \GL_n(R) \mid
hg_r h^{-1} = g_r \text{ for every }r:R \rightarrow R' \text{ and } h \in
H(R')\} \\ &= \{ x \in ((\gl_n)_a)^H (R) \mid \det(x) \text{ is invertible} \}.
\end{align*} So $C_{\GL_n}(H)$ can be represented by the localization
$A_{\det}$, if $A$ represents $((\gl_n)_a)^H$. Since $A$ is reduced, the same
is true for $A_{\det}$. Hence $C_{\GL_n}(H)$ is smooth (Proposition
\ref{prop:smoothness}).  
\end{proof}

The next lemma shows how the smoothness of centralizers descends from one
reductive group to the other, provided that the inclusion induces a reductive
pair (see Section \ref{sec:prelim}).

\begin{lemma} \label{lem:red-pair}
Let $(G',G)$ be a reductive pair of reductive algebraic groups over $k$ and let
$H \subseteq G$ be a closed subgroup scheme. If $C_{G'}(H)$ is smooth, then so
is $C_G(H)$.
\end{lemma}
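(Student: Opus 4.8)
The plan is to exploit the reductive-pair decomposition $\Lie(G') = \Lie(G) \oplus \m$ as $G$-modules together with the criterion for smoothness of an affine algebraic group scheme from Proposition \ref{prop:smoothness}(c), namely that $C_G(H)$ is smooth precisely when $\dim_k \Lie(C_G(H)) \le \dim C_G(H)$. By base change \eqref{eq:cent-base} and part (a) of Proposition \ref{prop:smoothness}, I may assume $k = \overline{k}$. By \eqref{eq:lie_cent} we have $\Lie(C_{G'}(H)) = (\g')^H$ and $\Lie(C_G(H)) = \g^H$; since $H \subseteq G$, the adjoint action of $H$ on $\g'$ is the restriction of the one on $G'$, so the $G$-module (hence $H$-module) decomposition $\g' = \g \oplus \m$ gives $(\g')^H = \g^H \oplus \m^H$. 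In particular $\dim_k \g^H \le \dim_k (\g')^H = \dim C_{G'}(H)$, where the last equality is the smoothness hypothesis on $C_{G'}(H)$ via Proposition \ref{prop:smoothness}(c).

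It therefore remains to bound $\dim C_{G'}(H)$ by $\dim C_G(H) + \dim_k \m^H$; combined with the previous inequality this forces $\dim_k \g^H \le \dim C_G(H)$ and we are done. For this I would look at the orbit map of the conjugation action of $(C_{G'}(H))^\circ$ restricted to the embedding of $G$ inside $G'$, or more cleanly consider the natural closed immersion $C_G(H) \hookrightarrow C_{G'}(H)$ (it is a closed subgroup scheme, being the scheme-theoretic intersection $C_{G'}(H) \cap G$, by \eqref{eq:cent1}). The quotient should be controlled by the normal slice $\m$: one expects a $C_{G'}(H)$-equivariant identification of the tangent space to the homogeneous space $C_{G'}(H)/C_G(H)$ at the base point with a subspace of $\m^H$, so that $\dim C_{G'}(H) - \dim C_G(H) \le \dim_k \m^H$. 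A slick way to obtain exactly this is to note $\Lie(C_G(H)) = \g^H$ is a direct summand of $\Lie(C_{G'}(H)) = \g^H \oplus \m^H$, hence $\dim C_{G'}(H) = \dim_k(\g^H \oplus \m^H)$ (using smoothness of $C_{G'}(H)$), while the closed immersion $C_G(H) \hookrightarrow C_{G'}(H)$ gives $\dim C_G(H) \le \dim C_{G'}(H)$; one then needs the complementary bound coming from the ``extra directions'' living in $\m^H$.

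The main obstacle is precisely making the last dimension count rigorous at the scheme-theoretic level, since $C_G(H)$ and $C_{G'}(H)$ need not be smooth a priori and one cannot simply pass to $\overline{k}$-points and count. I would handle this by working with the action of the (smooth, by hypothesis) group $C_{G'}(H)$ on the smooth affine variety $G' $, whose orbit through $e \in G'$ meeting the closed smooth subgroup $G$ is governed by $\m$; concretely, the conjugation action of $C_{G'}(H)$ on the coset space, or a direct comparison of the defining equations of $C_G(H) = C_{G'}(H) \cap G$ inside the smooth group $C_{G'}(H)$ using that $G$ is cut out near $e$ by $\dim_k \m$ equations whose differentials span $\m$, should yield $\dim C_G(H) \ge \dim C_{G'}(H) - \dim_k \m^H$. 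Once that inequality is in hand, chaining it with $\dim C_{G'}(H) = \dim_k \g^H + \dim_k \m^H$ gives $\dim C_G(H) \ge \dim_k \g^H = \dim_k \Lie(C_G(H))$, which by Proposition \ref{prop:smoothness}(c) is the smoothness of $C_G(H)$.
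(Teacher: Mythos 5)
Your reduction is correct and coincides with the paper's first step: using \eqref{eq:lie_cent} and the smoothness hypothesis, $\dim C_{G'}(H)=\dim_k\g'^H=\dim_k\g^H+\dim_k\m^H$, so by Proposition \ref{prop:smoothness}(c) everything comes down to the single inequality $\dim C_{G'}(H)-\dim C_G(H)\le\dim_k\m^H$. The problem is that this inequality --- which is where all the content of the lemma lives --- is left at the level of ``one expects'' and ``should yield'', and you flag it yourself as the main obstacle. Worse, the one concrete mechanism you propose for it cannot work: if you cut the smooth group $C_{G'}(H)$ by the $\dim_k\m$ local equations of $G$ in $G'$, Krull's height theorem bounds the drop in dimension only by the \emph{number} of equations, namely $\dim_k\m$, and not by the rank $\dim_k\m^H$ of their differentials restricted to $T_e C_{G'}(H)$; the rank of differentials controls the tangent space of the intersection, not a lower bound on its dimension. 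That route only gives $\dim C_G(H)\ge\dim_k\g^H-(\dim_k\m-\dim_k\m^H)$, which is strictly weaker than the required $\dim C_G(H)\ge\dim_k\g^H$ whenever $\m^H\neq\m$.

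What closes the gap in the paper is the equivariant quotient construction you allude to but do not carry out. One forms the quotient faisceau $X=C_{G'}(H)/C_G(H)$; since $C_{G'}(H)$ is smooth by hypothesis, $X$ is a smooth scheme of dimension $c=\dim C_{G'}(H)-\dim C_G(H)$. The inclusion of naive coset functors (using $C_{G'}(H)\cap G=C_G(H)$) induces a monomorphism $X\hookrightarrow(G'/G)^H$, where $G'/G$ is affine because both groups are reductive, so $c\le\dim_{\overline e}(G'/G)^H\le\dim_k T_{\overline e}\bigl((G'/G)^H\bigr)$. Finally, the projection $\pi:G'\to G'/G$ is $H$-equivariant and flat with smooth fibre $G$, so $d\pi_e:\g'\to T_{\overline e}(G'/G)$ is an $H$-equivariant surjection with kernel $\g$ (Lemmas \ref{lem:tangent-space}(c) and \ref{lem:surjective-diff}); hence $T_{\overline e}(G'/G)\cong\m$ as $H$-modules and, by Lemma \ref{lem:tangent-space}(b), $T_{\overline e}\bigl((G'/G)^H\bigr)=(T_{\overline e}(G'/G))^H\cong\m^H$. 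It is precisely this passage through $(G'/G)^H$ --- replacing the full codimension $\dim_k\m$ by its invariant part $\dim_k\m^H$ --- that your plan identifies but does not supply.
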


\begin{proof}
We proceed in $3$ steps. Let $\g' = \g \oplus \m$ be an $H$-module
decomposition of $\g'$.

1. It suffices to show that $c := \dim C_{G'}(H) - \dim C_G(H) \leq \dim_k
\m^H$. Indeed, by assumption (and using \eqref{eq:lie_cent}) 
we know that $\dim C_{G'}(H) = \dim_k \g'^H =
\dim_k \g^H + \dim_k \m^H$. So the desired inequality $\dim C_G(H) \geq \dim_k
\g^H$ (see Proposition \ref{prop:smoothness}) is equivalent to $c \leq \dim_k
\m^H$.

2. We show that $c \leq \dim_k T_{\overline e}((G'/G)^H)$. Consider the quotient $X
:= C_{G'}(H) / C_G(H)$. It is a smooth algebraic scheme of dimension $\dim X=
\dim_{\overline e} X = c$ (see \cite[III, \S3 2.7, 5.4 and 5.5(a)]{DG:1970}),
which can be formally obtained as the faisceau associated to the functor
\begin{align*} F_1: R \mapsto C_{G'}(H)(R) / C_{G}(H)(R), \end{align*} see
\cite[I, 5.6]{Jantzen:2003}. Now if we start with the functor $F_2: R \mapsto
G'(R) / G(R)$ we get as associated faisceau the smooth algebraic scheme $G'/G$
(which is in fact affine, due to the reductivity assumptions, see
\cite{Rich:1977}). Moreover, we have an inclusion of functors $F_2
\hookrightarrow G'/G$ (\cite[I, 5.6]{Jantzen:2003}). There is an obvious
action of $H$ on $F_2$ by conjugation, which extends uniquely to an action of
$H$ on $G'/G$ (see \cite[III, \S3, 1.3]{DG:1970}). We now have $F_1
\hookrightarrow F_2^H \hookrightarrow (G'/G)^H$, where the first inclusion
follows from the fact that $C_{G'}(H) \cap G = C_G(H)$. Therefore, we have a
monomorphism $X \hookrightarrow (G'/G)^H$ (see \cite[I, 5.4
(4)]{Jantzen:2003}). Note that $(G'/G)^H$ is closed in $G'/G$ (\emph{Ibid.}, I, 2.6 (10)),
hence also affine. We conclude that $c = \dim_{\overline e} X \leq
\dim_{\overline e} (G'/G)^H \leq \dim_k T_{\overline e}((G'/G)^H)$.  
 
3. We show that $T_{\overline e}(G'/G)^H \cong \m^H$ as $k$-vector spaces. Let us
first note that the natural map $\pi: G' \rightarrow G'/G$ is $H$-equivariant
with respect to the action defined in step 2 (because $\pi$ is the composite
$G' \rightarrow F_2 \rightarrow G'/G$). It is also flat with smooth fibre
$\pi^{-1}(\overline{e}) = G$ (see \cite[III, \S3, 2.5]{DG:1970}) and therefore
the induced map $d\pi_e: \g' \rightarrow T_{\overline e} (G'/G)$ is
$H$-equivariant and surjective, with kernel $\g$, according to Lemmas
\ref{lem:tangent-space} (c) and \ref{lem:surjective-diff}. So now $\m \cong
T_{\overline e} (G'/G)$ as $H$-modules, so that $\m^H \cong (T_{\overline e}
(G'/G))^H = T_{\overline e} ((G'/G)^H)$, where the last equality is Lemma
\ref{lem:tangent-space} (b).
\end{proof}

\begin{remark}
Let $H \subseteq G$ be a closed subgroup scheme of a smooth affine algebraic
$k$-group scheme and suppose that $H^1(H,\g)=0$. Then $C_G(H)$ is smooth,
according to Theorem \ref{thm:lissite}. We can also use the above two lemmas to
obtain a new proof for this fact: Pick a closed embedding $G \hookrightarrow
\GL_n$ for a suitable $n$ (see \cite[3.4]{Water:1979}). By Lemma
\ref{lem:caseGL}, $C_{\GL_n}(H)$ is smooth. If $H^1(H,\g)=0$, then $\dim_k
\gl_n^H = \dim_k \g^H + \dim_k(\gl_n/\g)^H$. Now the $3$ steps of the proof of
Lemma \ref{lem:red-pair} go through with $G'=\GL_n$ and $\m$ replaced by
$\gl_n/\g$ (we also need to extend our tangent space arguments to the possibly
non-affine variety $G'/G$).  
\end{remark}

The following final lemma states that the smoothness of a centralizer of a
subgroup scheme depends on the ambient group only up to central, separable
isogenies.

\begin{lemma} \label{lem:sep-isogeny}
Let $\pi: G \rightarrow G'$ be a central, separable isogeny of smooth affine algebraic $k$-group schemes,
and suppose that $H\subseteq G$ is a closed subgroup scheme. Then
$C_G(H)$ is smooth if and only if $C_{G'}(\pi H)$ is smooth. 
In particular, all centralizers in $G$ are smooth if and only if the same is true for $G'$.
\end{lemma}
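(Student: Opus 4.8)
The plan is to reduce everything to the two cases covered by the earlier lemmas, using that a central isogeny relates centralizers in a controlled way. Since smoothness can be checked after base change to $\overline k$ (Proposition \ref{prop:smoothness} together with \eqref{eq:cent-base}), I would first reduce to $k = \overline k$. The key structural fact I want to exploit is that, because $\pi$ is \emph{central}, the kernel $\ker \pi$ lies in $Z(G)$ and hence acts trivially by conjugation, so $\pi$ induces an isomorphism on the ``inner automorphism'' level; and because $\pi$ is \emph{separable}, $\ker\pi$ is smooth, so $d\pi_e \colon \g \to \g'$ is an isomorphism (the kernel of $d\pi_e$ is $\Lie(\ker\pi)$, which is $0$ when $\ker\pi$ is smooth of dimension $0$). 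Thus $\pi$ identifies $\g$ with $\g'$ as a $k$-vector space, compatibly with the adjoint actions.

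The heart of the argument is to show $C_G(H)$ and $C_{G'}(\pi H)$ have the same dimension and ``the same'' Lie algebra, so that one is smooth iff the other is (Proposition \ref{prop:smoothness}(c)). First I would check that the preimage $\pi^{-1}(\pi H) = H \cdot \ker\pi$, and since $\ker\pi \subseteq Z(G)$ this has the same centralizer as $H$: $C_G(H) = C_G(H \cdot \ker\pi) = C_G(\pi^{-1}(\pi H))$. Next, $\pi$ restricts to a morphism $C_G(\pi^{-1}(\pi H)) \to C_{G'}(\pi H)$ which is itself a central isogeny; using that $\pi$ is surjective with kernel in the center, one sees the induced map on $\overline k$-points has finite kernel and image of finite index, so the dimensions agree: $\dim C_G(H) = \dim C_{G'}(\pi H)$. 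On the Lie algebra side, $d\pi_e$ is an isomorphism carrying $\g^H = \Lie(C_G(H))$ (by \eqref{eq:lie_cent}) onto $\g'^{\pi H} = \Lie(C_{G'}(\pi H))$; concretely, $\g^H = \g^{\pi^{-1}(\pi H)}$ because $\ker\pi$ acts trivially on $\g$ (central elements act trivially under $\Ad$), and $d\pi_e$ intertwines the $H$-action on $\g$ with the $\pi H$-action on $\g'$. Hence $\dim_k \g^H = \dim_k \g'^{\pi H}$, and combining with the equality of dimensions of the centralizers, $C_G(H)$ is smooth iff $\dim C_G(H) = \dim_k \g^H$ iff $\dim C_{G'}(\pi H) = \dim_k \g'^{\pi H}$ iff $C_{G'}(\pi H)$ is smooth.

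For the final sentence (``all centralizers in $G$ are smooth iff the same is true for $G'$''), I would note that $H \mapsto \pi H$ is a surjection, in the sense of Section \ref{subsec:morphisms}, from closed subgroup schemes of $G$ onto closed subgroup schemes of $G'$: given a closed subgroup scheme $H' \subseteq G'$, its preimage $\pi^{-1}(H')$ is a closed subgroup scheme of $G$ with $\pi(\pi^{-1}(H')) = H'$ (surjectivity of $\pi^{-1}H' \to H'$, as recorded in Section \ref{subsec:morphisms}). So the first part of the lemma, applied to all $H$, gives both implications at once.

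I expect the main obstacle to be the bookkeeping around the precise scheme-theoretic identities $\pi^{-1}(\pi H) = H\cdot\ker\pi$ and $C_G(H\cdot\ker\pi) = C_G(H)$, and verifying that $\pi$ genuinely restricts to a central isogeny between the two centralizers (rather than merely a morphism); this requires checking surjectivity of $C_G(\pi^{-1}(\pi H)) \to C_{G'}(\pi H)$ in the comorphism sense, for which I would invoke the characterization of surjectivity via faithfully flat extensions from Section \ref{subsec:morphisms} together with centrality of $\ker\pi$.
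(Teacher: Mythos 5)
Your overall strategy --- identify the Lie algebras via $d\pi$, identify the dimensions via the isogeny, then conclude with Proposition \ref{prop:smoothness} --- is the same as the paper's, and the Lie algebra half is essentially fine (modulo a routine faithfully flat descent to see that $\g'^{\pi H}$ equals the fixed points of $\g'$ under $H$ acting through $\pi$, which the paper carries out at the level of $(G')^H=C_{G'}(\pi H)$). The gap is in the dimension comparison. You assert that $\pi$ restricts to a \emph{central isogeny} $C_G(\pi^{-1}(\pi H))\rightarrow C_{G'}(\pi H)$, i.e.\ that this restriction is surjective, and you propose to prove this by ``faithfully flat descent plus centrality of $\ker\pi$.'' That is not enough, and in fact the restricted map need not be surjective. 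Take $p\neq 2$, $\pi:\SL_2\rightarrow\PSL_2$, and $H$ the order-$4$ subgroup generated by $w=\left(\begin{smallmatrix}0&1\\-1&0\end{smallmatrix}\right)$. Then $C_{\SL_2}(H)$ is the maximal torus $T'$ containing the regular semisimple element $w$, whereas $C_{\PSL_2}(\pi H)$ is the full normalizer of the image of $T'$: the extra component consists of elements conjugating $w$ to $-w$, which is invisible modulo the center. So the image of your map has index $2$. What descent plus centrality actually give you is only that a lift $g$ of $g'\in C_{G'}(\pi H)$ satisfies $ghg^{-1}h^{-1}\in\ker\pi$ for all $h\in H$; they do not force the commutator to vanish.

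The true and sufficient statement is the dimension equality $\dim C_G(H)=\dim C_{G'}(\pi H)$, equivalently that the \emph{identity component} of $M:=\pi^{-1}C_{G'}(\pi H)$ lies in $C_G(H)$, and proving it requires an idea your sketch is missing: for fixed $h$, the commutator map $x\mapsto h x h^{-1}x^{-1}$ on $M^{\circ}$ is a \emph{homomorphism} into $\ker\pi$ (this is where centrality enters), and $\ker\pi$ is finite \'etale because the isogeny is separable, so any homomorphism from a connected group scheme into it is trivial. This is the second, essential use of separability (the first being the injectivity of $d\pi$); the paper executes it carefully over an arbitrary base ring using the identity-component functor of Section \ref{subsec:con-comp}. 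The remaining pieces of your outline --- the reduction to $\overline{k}$, the identity $C_G(H)=C_G(H\cdot\ker\pi)$, the isomorphism $\g^H\cong\g'^{\pi H}$, and the deduction of the final sentence from $\pi\pi^{-1}H'=H'$ --- all match the paper's proof.
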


\begin{proof}
We let $H$ act on $G'$ via $h.x = \pi(h)x\pi(h)^{-1}$ for $h \in H(R)$, $x \in G'(R)$.
In this way $\pi:G\rightarrow G'$ becomes an $H$-equivariant morphism.
Since $\ker \pi$ is smooth and finite,
the differential $d \pi:\g \rightarrow \g'$ is injective.
Moreover, according to our smoothness assumptions and Lemmas \ref{lem:tangent-space} (c) and
\ref{lem:surjective-diff}, 
it is also $H$-equivariant and surjective. Thus it induces an isomorphism $\g^H \cong \g'^H$.

We claim that $(G')^H = C_{G'}(\pi H)$.
The reverse inclusion follows from the definitions. Conversely,
suppose that $x \in (G')^H(R)$. Let $r: R \rightarrow R'$ be a $k$-algebra homomorphism and
let $h' \in \pi H(R')$.
We have to show that $h' x_r h'^{-1} = x_r$.
There is a faithfully flat extension $s:R' \rightarrow S$ and an element
$h \in H(S)$ such that $h'_s = \pi(h)$. By our choice of $x$ we get that
$x_{s\circ r} = h.x_{s\circ r} = h'_s x_{s\circ r} h_s'^{-1}$.
Now the injectivity of $s$ implies that $x_r = h'x_r h'^{-1}$, as required.

We now have established that $\Lie(C_G(H)) = \g^H \cong \g'^H = \Lie(C_{G'}(\pi H))$. To
prove the assertion, it therefore
remains to show that $\dim C_G(H) = \dim C_{G'}(\pi H)$.
Let $M = \pi^{-1} C_{G'}(\pi H) \subseteq G$. 
The map $\pi$ induces an exact sequence
\begin{equation*}
1 \rightarrow \ker(\pi) \rightarrow M \rightarrow C_{G'}(\pi H) \rightarrow 1.
\end{equation*}
It follows that $\dim M = \dim C_{G'}(\pi H)$. 
By construction, we have $C_G(H) \subseteq \pi^{-1} (G')^H = M$. 
To show that $\dim C_G(H) = \dim C_{G'}(\pi H)$, it therefore suffices to check
that $M^\circ \subseteq C_G(H)^\circ$.

Let $m \in M^\circ(R)$ and $h \in H(R')$ for some morphism of $k$-algebras $r:R \rightarrow R'$.
We need to show that $hm_r h^{-1}m_r^{-1} = e_{R'}$. 
Consider the natural map of $R'$-functors
\begin{gather*}
\varphi : M_{R'}^\circ \rightarrow (\ker \pi)_{R'} \\
M^\circ(R'') \ni x \mapsto h_s x h_s^{-1} x^{-1} \text{ for } s:R' \rightarrow R''. 
\end{gather*}
We first note that $\varphi$ is well-defined. Indeed, if $x \in M^\circ(R'')$, then $\pi(x)$ commutes with
all elements of $\pi H(R'')$. In particular, it commutes with $\pi(h_s)$. But then 
$h_s x h_s^{-1} x^{-1}$ has to lie in $\ker \pi(R'')$. In fact, $\varphi$ is even a morphism of affine
group schemes over $R'$, because we can compute that
\begin{equation*}
\varphi(xx') = h_s xx' h_s^{-1} x'^{-1} x^{-1} =
h_s x h_s^{-1} h_s x' h_s^{-1} x'^{-1} x^{-1} =
h_s x h_s^{-1} \varphi(x') x^{-1} = \varphi(x) \varphi(x'),
\end{equation*}
where for the last equation we use that $\ker(\pi) \subseteq Z(G)$.
Taking identity components, we get an induced homomorphism 
$\varphi^\circ: (M_{R'}^\circ)^\circ \rightarrow ((\ker \pi)_{R'})^\circ$.
Now, according to \eqref{eq:comp-base},
$(M_{R'}^\circ)^\circ = ((M^\circ)^\circ)_{R'} = (M^\circ)_{R'}$ and 
$((\ker \pi)_{R'})^\circ = ((\ker \pi)^\circ)_{R'} = \{e \}_{R'} = \{e_{R'} \}$, where
we use that $\ker \pi$ is smooth and finite.
So in fact our map $\varphi$ factors over $M_{R'}^\circ \rightarrow \{e_{R'} \}$. This implies that
$\varphi(m)=e_{R'}$, as required.

For the statement about all centralizers we just note that $\pi \pi^{-1} H' = H'$ for all closed subgroup schemes
$H' \subseteq G'$.
\end{proof}
We are now in a position to prove Theorem \ref{thm:main}. 

\begin{proof}[Proof of Theorem \ref{thm:main}] 
According to \eqref{eq:cent-base} and Proposition
\ref{prop:smoothness}, we can assume that $k=\overline{k}$ is algebraically
closed. Let $H$ be any closed subgroup scheme of $G$. Using the following idea
of \cite[2.8.1]{MT:2009}, we can first reduce to the case that $G$ is
connected: Let $H$ act via conjugation on $G$. Since $G^\circ$ is normal in $G$,
this induces an action of $H$ on $G^\circ$ and $(G^\circ)^H$ is a normal
subgroup of $C_G(H)$. This allows us to consider the exact sequence of affine
group schemes \begin{align*} 1 \rightarrow (G^\circ)^H \rightarrow C_G(H)
\rightarrow C_G(H)/(G^\circ)^H \rightarrow 1.  \end{align*} It now remains to
show that the left and right terms of this sequence are smooth (see Section
\ref{subsec:morphisms}). For the right hand side, note that the closed
embedding $C_G(H) \rightarrow G$ together with the fact that $C_G(H) \cap G^\circ =
(G^\circ)^H$ induces a closed embedding $C_G(H) / (G^\circ)^H \rightarrow
G/G^\circ$. Since $G/G^\circ$ is an \'etale group scheme (smooth and finite),
the same is true for $C_G(H) / (G^\circ)^H$ (see \cite[6.2]{Water:1979}). For
the left hand side, we claim that $(G^\circ)^H = (C_{G^\circ}(H\cap
G^\circ))^{\Gamma}$, where $\Gamma = H/(H\cap G^\circ)$. Indeed, $H$ acts via
conjugation on $C_{G^\circ}(H\cap G^\circ)$ with fixed points $(G^\circ)^H$.
But clearly, $H\cap G^\circ$ acts trivially, so the action factors through
$\Gamma$ and we get the asserted equality. Now, as a closed subgroup scheme of
$\pi_0G$, $\Gamma$ is again a smooth finite group scheme with order invertible
in $k$, hence linearly reductive. We can use Theorem \ref{thm:lissite} to
conclude that the fixed points are again a smooth group scheme, provided that
$C_{G^\circ}(H\cap G^\circ)$ is smooth. This allows us to assume from the
outset that $G$ is connected and, by hypothesis, that the characteristic of $k$
is zero or pretty good for $G$. 

If the characteristic is
zero, all closed subgroup schemes are smooth. So let us assume that $p=\cha(k)$
is a pretty good prime for $G$.

Let $G_1,\dots,G_t$ be the simple components of the derived group of $G$.
We may assume that $p$ is not very good for 
the root systems of
$G_1, \dots, G_n$, whereas it is for
$G_{n+1},\dots,G_{t}$. 
Since $p$ is good for $G$ and hence for all components of the derived group,
this implies that all groups $G_1, \dots, G_n$ are of type $A$.
Let $T$ be the radical of $G$ (which is a torus). Then multiplication in $G$
gives a central isogeny $T \times G_1 \times \dots \times G_t \rightarrow
G$. Denote by $\tilde{G}$ the image of $T \times G_1 \times \dots \times G_n$ in
$G$. Let $\pi: \tilde{G} \times G_{n+1} \times \dots \times G_t \rightarrow G$
be the induced central isogeny.
Due to our characteristic assumption, the Lie algebras $\Lie(G_i)$, $i>n$, are
simple (see \cite[Cor.\ 2.7]{Hoge:1982}). Using this, one can show that
$\Lie(\ker \pi) = \ker (d\pi)=0$, i.e.\ that  $\pi$ is separable. So, by Lemma
\ref{lem:sep-isogeny}, we may assume that $G=\tilde{G} \times G_{n+1} \times \dots \times
G_t$. 

Now let $H \subseteq G$  be a closed subgroup scheme. Then we find that 
$$C_G(H)
= C_{\tilde{G}}(\tilde{H}) \times C_{G_{n+1}}(H_{n+1}) \times \dots 
\times C_{G_t}(H_t),$$ 
where $H_i = p_i H$
for the projection $p_i:G \rightarrow G_i$,
and similarly for $\tilde{H}$.

We claim that the centralizers $C_{G_i}(H_i)$, $i>n$, are smooth.
In fact, since every central isogeny between
simple algebraic groups with the same root system in very good characteristic is separable, we
need to prove the assertion only for one algebraic simple group with a given
root system in very good characteristic.
According to \cite{Rich:1967}, for a given root system and an algebraically
closed field in very good characteristic, we can find a simple algebraic group
$G_i'$ of this type that fits into a reductive pair $(\GL_m,G_i')$ for a suitable
$m$. We may conclude that the $C_{G_i}(H_i)$, $i>n$, are smooth, by employing 
Lemmas \ref{lem:caseGL}, \ref{lem:red-pair} and \ref{lem:sep-isogeny}.  

It remains to show that centralizers in $\tilde{G}$ are smooth. By Lemma 
\ref{lem:G-vg} (d) and (e), $p$ is pretty good for $\tilde{G}$. By construction of $\tilde{G}$,
and by replacing $G$ by $\tilde{G}$, we may assume that $G$ is a connected reductive
group with root system $\Phi = A_{m_1} \times \dots \times A_{m_n}$, 
where $p$ divides all integers
$m_1 +1,\dots,m_n+1$. We also assume that the radical of $G$ has at least dimension $n$
(else we replace $G$ by $G \times S$, for some torus $S$, and exploit the fact that
$C_{G\times S}(H \times 1) = C_G(H) \times S$).

Let us consider now the reductive group $G_\CC$ over $\CC$ defined by the root datum
$(X,\Phi, Y, \Phi^\vee)$ of $G$. 
We identify $G_\CC$ and all other (smooth) complex algebraic groups below with
their groups of complex points. 
The prime $p$ now is just a fixed prime that is pretty good for the root datum
of $G_\CC$;
all we need is that it 
does not divide the order of the finite groups $Z(G_\CC)/\Rad(G_\CC)$ and 
$Z(G_\CC^\vee)/\Rad(G_\CC^\vee)$,
which follows from the fact that the abelian groups $X/\ZZ \Phi$ and $Y/\ZZ \Phi^\vee$ have
no $p$-torsion.

Let $r$ denote the dimension of the radical of $G_\CC$. By assumption, $r \geq n$. 
To ease notation we set
$\SL = \SL_{m_1 +1} \times \dots \times \SL_{m_n +1}$,
$\PSL = \PSL_{m_1 +1} \times \dots \times \PSL_{m_n +1}$ and
$\GL = \GL_{m_1 +1} \times \dots \times \GL_{m_n +1}$,
considered as complex algebraic groups. 
We now fix some data that determines $G_\CC$:
it is determined by its subgroups $\Der(G_\CC)$, $\Rad(G_\CC)$ and the central 
isogeny $\Der(G_\CC) \times \Rad(G_\CC) \rightarrow G_\CC$ given by multiplication.
In our setup this means that
there are normal, central, finite algebraic subgroups $M \subseteq N \subseteq 
\SL$ of 
$\SL$ and there is an injective homomorphism
$$\varphi: N/M \rightarrow \Gm^r$$ such that $G_\CC$ is (isomorphic to) the quotient of
$$N/M \rightarrow (\SL/M) \times \Gm^r, \quad
aM \mapsto (aM, \varphi(aM)^{-1}).$$
Note that $M$ is the kernel of the natural isogeny $\SL \rightarrow \Der(G_\CC)$, so
that $\SL/M \cong \Der(G_\CC)$, and $N/M \cong \Der(G_\CC) \cap \Rad(G_\CC)$.

Let $Z = Z(\SL)$. We claim that $p$ does not divide the orders of $M$ and $Z/N$. Indeed,
the order of $M = \ker(\SL \rightarrow \SL/M)$ is the same as the order of 
$\ker((\SL/M)^\vee 
\rightarrow \PSL)$, i.e.\ of $Z(\Der(G_\CC)^\vee)
= Z(G_\CC^\vee)/\Rad(G_\CC^\vee)$. On the other hand, 
\begin{align*}
Z/N &\cong (Z/M)/(N/M) \cong
Z(\Der(G_\CC))/(\Rad(G_\CC) \cap \Der(G_\CC)) \\
&\cong (Z(G_\CC)\cap\Der(G_\CC))/(\Rad(G_\CC)\cap\Der(G_\CC)) 
\cong Z(G_\CC)/\Rad(G_\CC).
\end{align*}
The assertion on $p$ follows.

Let $N_p$ denote the (unique) $p$-Sylow subgroup of $N$, and similarly $Z_p \subseteq Z$.
The last paragraph implies that the composite
$i: N_p \rightarrow N \rightarrow N/M$ is injective.
It also implies that $N_p=Z_p$.
Since $Z \cong \mu_{m_1+1} \times \dots \times \mu_{m_n+1}$, this gives
$N_p \cong \mu_{p^{s_1}} \times \dots \times \mu_{p^{s_n}}$, where $m_i +1=p^{s_i}m_i'$ with
$m_i'$ and $p$ coprime. In particular, the inclusion $N_p \rightarrow Z$ corresponds to
the standard inclusion
$\mu_{p^{s_1}} \times \dots \times \mu_{p^{s_n}} 
\rightarrow\mu_{m_1+1} \times \dots \times \mu_{m_n+1}$.
We embed $Z$ into $\Gm^n$ via the componentwise inclusion of $\mu_{m_1+1} \times
\dots \times \mu_{m_n +1}$ into $\Gm^n$.

The injective homomorphism $\varphi\circ i:N_p \rightarrow \Gm^r$ yields a 
$\ZZ$-linear surjection 
of character groups
$\ZZ^r \rightarrow \ZZ/p^{s_1} \times \dots \times \ZZ/p^{s_n}$. 
Choose a matrix $A \in \ZZ^{n \times r}$    
such that the above surjection is induced by
$$\ZZ^r \xrightarrow{A} \ZZ^n \rightarrow 
\ZZ/p^{s_1} \times \dots \times \ZZ/p^{s_n}$$ in the obvious way.
Since this concatenation is surjective, and since all $s_i$ are positive by assumption,
the reduction of $A$ mod $p$ is a surjective linear map $\Fp^r \rightarrow \Fp^n$ of 
$\Fp$-vector spaces. In particular, the elementary divisors $\alpha_1,\dots,
\alpha_n$ of $A$ over $\ZZ$ must all be prime to $p$.
Up to change of basis on both sides over $\ZZ$, $A$ decomposes as a linear map as
$\ZZ^r \rightarrow \ZZ^n \rightarrow \ZZ^n$, where the first map is the projection
onto the first $n$ components, and the second map sends $(x_1,\dots,x_n)$ to
$(\alpha_1 x_1,\dots,\alpha_n x_n)$.
Switching back to the category of multiplicative algebraic groups, we construct
the following commutative diagram:
\begin{equation} \label{eq:diag1}
\begin{tikzpicture}
[bij/.style={above,sloped,inner sep=0.5pt}]
\matrix (m) [matrix of math nodes, row sep=1.5em, column sep=3em,
text height=1.5ex, text depth=0.25ex]
	  {Z & N_p \\ 
      \Gm^n & & \Gm^r \\
      \Gm^n & \Gm^n & \Gm^r \\};
  \path[left hook->,font=\scriptsize]
  (m-1-2) edge (m-1-1) edge (m-2-1);
  \path[right hook->,font=\scriptsize] 
  (m-1-2) edge node[auto] {$\varphi \circ i$} (m-2-3)
  (m-1-1) edge (m-2-1);
  \path[->,font=\scriptsize]
  (m-2-1) edge node[auto] {$\varphi'$} (m-2-3)
    edge node[bij] {$\sim$} (m-3-1)
    edge[dotted] node[above,inner sep=3pt] {$\psi'$} (m-3-3)
  (m-3-3) edge node[bij]{$\sim$} node[right] {$\psi$} (m-2-3);
  \path[->>,font=\scriptsize] (m-3-1) edge node[below] {$\pi$} (m-3-2);
  \path[right hook->,font=\scriptsize](m-3-2) edge node[below] {$j$} (m-3-3);
\end{tikzpicture}
\end{equation}
Here, $\varphi'$ is the map induced by $A$, the isomorphisms correspond to the changes
of bases, $j$ is the inclusion into the first $n$ coordinates of $\Gm^r$, and $\pi$ is
a surjection with finite kernel of order prime to $p$ (this is due to the analysis 
of the elementary divisors above). In particular, the order of
$\ker(\psi') \cong \ker(\pi)$ is prime to $p$.

We finally consider the following diagram with exact rows, defining complex connected
reductive groups $H_1$ and $H_2$:
\begin{equation} \label{eq:diag2}
\begin{tikzpicture}
[equal/.style={above,sloped,inner sep=0.5pt}]
\matrix (m) [matrix of math nodes, row sep=1em, column sep=1em,
text height=1.5ex, text depth=0.25ex]
	  {N/M & (\SL/M) \times \Gm^r & G_\CC &1\\ 
    N_p & \SL \times \Gm^r & H_1 &1\\
    Z & \SL \times \Gm^r & H_2 &1\\
    Z & \SL \times \Gm^r & \GL \times \Gm^{r-n}&1  
    \\};
  \path[->,font=\scriptsize]
  (m-1-1) edge node[auto] {$i_1$} (m-1-2) 
  (m-1-2) edge (m-1-3) 
  (m-1-3) edge (m-1-4)
  (m-2-1) edge node[auto] {$i_2$} (m-2-2)
    edge node[auto] {$i$} (m-1-1)
  (m-2-2) edge (m-2-3) 
    edge node[auto] {$f$} (m-1-2)
    edge node[left] {$g$} (m-3-2)
  (m-2-3) edge (m-2-4)
  (m-3-1) edge node[auto] {$i_3$} (m-3-2)
  (m-3-2) edge (m-3-3) 
  (m-3-3) edge (m-3-4)
  (m-4-1) edge node[auto] {$i_4$} (m-4-2)
    edge node[equal] {$=$} (m-3-1)
  (m-4-2) edge (m-4-3) 
    edge node[auto] {$h$} (m-3-2)
  (m-4-3) edge (m-4-4);
  \path[right hook->] (m-2-1) edge (m-3-1);
\end{tikzpicture}
\end{equation}
Here, $i_1(aM) = (aM,\varphi(aM)^{-1})$, 
$i_2(a) = (a, \varphi'(a)^{-1})$,
$i_3(a) = (a, \varphi'(a)^{-1})$,
$i_4(a) = (a, j(a)^{-1})$; and
$f(x,t) = (xM,t)$,
$g(x,t) = (x,t)$,
$h(x,t) = (x,\varphi'(t_1,\dots,t_n)\psi(1,\dots,1,t_{n+1},\dots,t_{r}))$.
Per construction, all these maps are homomorphisms. The commutativity of \eqref{eq:diag1}
yields the commutativity of diagram \eqref{eq:diag2}, and moreover the surjectivity of
$h$. Since $f$ and $g$ are also surjective, these maps induce surjective 
homomorphisms $\overline{f}:H_1 \rightarrow G_\CC$,
$\overline{g}:H_1 \rightarrow H_2$ and
$\overline{h}:\GL \times \Gm^{r-n} \rightarrow H_2$.
For $(x,t) \in \SL \times \Gm^r$ let us write $\overline{(x,t)}$ for the image 
of $(x,t)$ in $H_1$, and also (with slight abuse of notation) for its image
in $\GL \times \Gm^{r-n}$.
Using the definitions, we can verify that the maps
\begin{align*}
\ker(\overline{f}) &\rightarrow N/N_p, \overline{(x,t)} \mapsto xN_p, \\
\ker(\overline{g}) &\rightarrow Z/N_p, \overline{(x,t)} \mapsto xN_p, \\
\ker(\overline{h}) &\rightarrow \ker(\psi'), \overline{(x,t)} \mapsto x(t_1,\dots,t_n) 
\end{align*}
are indeed well-defined inclusions of algebraic groups.
In particular, $\overline{f}, \overline{g}$ and $\overline{h}$ are \emph{isogenies with
kernels of order prime to $p$}.

To conclude, we express the diagram
of connected reductive groups and central isogenies
$$G_\CC \longleftarrow H_1 \longrightarrow H_2 \longleftarrow \GL
\times \Gm^{r-n}$$
in root data terms. This gives rise to connected reductive groups over $k$
and \emph{separable isogenies}
$$G \longleftarrow H_1' \longrightarrow H_2' \longleftarrow \GL \times \Gm^{r-n}.$$ 
We finish the proof by referring again to
Lemmas \ref{lem:caseGL} and \ref{lem:sep-isogeny}.  
\end{proof}

\section{Non-smooth centralizers}
\label{sec:nonsmooth}

In this section, let $k$ be an \emph{algebraically closed} field of
characteristic $p > 0$ and let $G$ be a connected reductive algebraic group
over $k$. Let $T$ be a maximal torus of $G$, with corresponding root datum
$(X,\Phi,Y,\Phi^\vee)$.

Our method to construct examples of closed subgroup schemes of $G$ with
non-smooth centralizers rests on the following easy observation.

\begin{lemma} \label{lem:torsion}
Suppose $H$ is a closed subgroup scheme of $G$ containing $T$. Then
$C=C_G(H)=Z(H)$ is diagonalizable. In particular, it is smooth if and only if
the $p$-torsion of its character group $X(C)$ vanishes.  \end{lemma}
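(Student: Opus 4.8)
The plan is to pin $C=C_G(H)$ down as a subgroup scheme of the maximal torus $T$, identify it with $Z(H)$, and then read everything off from the structure theory of diagonalizable group schemes.

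First I would show that $C_G(H)$ is contained in $T$. Since $T\subseteq H$, every functorial point centralizing $H$ centralizes $T$, so $C_G(H)\subseteq C_G(T)=G^T$ (conjugation action). I would then use the scheme-theoretic equality $C_G(T)=T$: a torus is linearly reductive, so by Theorem~\ref{thm:lissite} the fixed point scheme $G^T=C_G(T)$ is a smooth affine algebraic group scheme; since $C_G(T)(\overline k)=C_{G(\overline k)}(T(\overline k))=T(\overline k)$ by the classical theory of connected reductive linear algebraic groups (see \cite{Springer:1998}), and $T\subseteq C_G(T)$ are both smooth closed subgroup schemes of $G$ with the same group of $\overline k$-points, the equivalence of categories of Section~\ref{subsec:smoothness} forces $C_G(T)=T$. (Alternatively one may simply cite that the centralizer of a maximal torus of a reductive group scheme is the torus, cf.~\cite{SGA3}.) Consequently $C=C_G(H)\subseteq T\subseteq H$, and therefore $C=C_G(H)\cap H=C_H(H)=Z(H)$; this already proves that the two centralizers in the statement agree.

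Next I would observe that $C$, being a closed subgroup scheme of the torus $T$, is diagonalizable, since closed subgroup schemes of diagonalizable group schemes are diagonalizable (see \cite{DG:1970}). Hence the coordinate ring $k[C]$ is the group algebra $k[X(C)]$ of the finitely generated abelian group $X(C)$, where the surjection $X(T)\twoheadrightarrow X(C)$ is restriction of characters along $C\hookrightarrow T$. Writing $X(C)\cong\ZZ^r\oplus\ZZ/d_1\oplus\dots\oplus\ZZ/d_s$, the algebra $k[X(C)]$ is a tensor product of a Laurent polynomial ring with the rings $k[x_i]/(x_i^{d_i}-1)$, and this is reduced precisely when no $d_i$ is divisible by $p=\cha k$, i.e.\ precisely when $X(C)$ has no $p$-torsion. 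Since $k$ is algebraically closed, $C$ is smooth if and only if $k[C]$ is reduced by Proposition~\ref{prop:smoothness}(d), and the asserted criterion follows.

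I do not expect a genuine obstacle here: the only step that needs a little care is the scheme-theoretic identity $C_G(T)=T$, where linear reductivity of $T$ and Theorem~\ref{thm:lissite} provide smoothness and the classical point-set computation finishes the job. Everything else is routine bookkeeping with diagonalizable group schemes.
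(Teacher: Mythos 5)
Your proof is correct and follows essentially the same route as the paper, whose entire argument is the chain $C=C_G(H)\subseteq C_G(T)=T\subseteq H$ followed by a citation for the smoothness criterion for diagonalizable group schemes. You have merely filled in the details the paper leaves implicit, namely the scheme-theoretic identity $C_G(T)=T$ (via linear reductivity of $T$ and Theorem~\ref{thm:lissite}) and the reducedness computation for $k[X(C)]$.
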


\begin{proof}
We have $C=C_G(H) \subseteq C_G(T) = T \subseteq H$, so $C$ is diagonalizable
and coincides with the center of $H$. (For the  smoothness criterion see
\cite[IV, \S1, 1.2]{DG:1970}.) \end{proof}

\begin{example} \label{ex:counterexamples} 
(a). Suppose $p$ is a bad prime for $G$. Then, Lemmas \ref{lem:good} and
\ref{lem:torsion} suggest how to find a subgroup $H$ such that $C_G(H)$ is not
smooth: 

According to Lemma \ref{lem:good}, we can pick a subset $\Phi' \subseteq \Phi$
such that the $p$-torsion of $\ZZ \Phi / \ZZ \Phi'$ does not vanish. We may
assume that $\Phi'$ is closed and symmetric (else we replace it by $\ZZ \Phi'
\cap \Phi$). Now the connected reductive group $H$ generated by $T$ together
with the root groups $U_\alpha$, $\alpha \in \Phi'$ has center with character
group $X(T)/\ZZ \Phi'$ (see \cite[II, 1.6 and 1.7]{Jantzen:2003}), which
contains $\ZZ \Phi / \ZZ \Phi'$. So $C_G(H)$ is non-smooth, by Lemma
\ref{lem:torsion}.

Since we did not give a proof of Lemma \ref{lem:good}, it may be convenient to now
explicitly describe how to obtain a closed symmetric subset $\Phi'$ as above.
By definition of a bad prime, we can find an irreducible component $\Phi_1$ of
$\Phi$ such that $p$ divides a coefficient of the highest root $\beta$ of
$\Phi_1$ relative to some base $\Delta_1 = \{\alpha_1,\dots,\alpha_r\}$ of
$\Phi_1$. Suppose that the coefficient $m_1$ of $\alpha_1$ in $\beta$ is
divisible by $p$. Let $\Delta$ be a base of $\Phi$ containing $\Delta_1$. Let
$\Phi'$ be the  subroot system with base $\Delta'=(\Delta \setminus
\{\alpha_1\}) \cup \{-\beta\}$. Now the $p$-torsion of $\ZZ \Phi / \ZZ \Phi'$
equals  $\ZZ / p^r \ZZ$, where $p^r$ is the $p$-part of the integer $m_1$. Note
that in the language of the Borel-de Siebenthal algorithm (see e.g.\
\cite[2.1]{Good:2007}), we have obtained $\Phi'$ (and hence $H$), by ``crossing
out'' the node $\alpha_1$ in the extended Dynkin diagram corresponding to
$\Delta_1$, i.e.\ by crossing out a single node with a coefficient that is
divisible by $p$.  

(b). Suppose that $G$ has root system $\Phi=A_{m_1}\times \dots \times A_{m_n}$,
and that $p$ is not pretty good for $G$. Again, we want to find a subgroup $H$ such that
$C_G(H)$ is not smooth.

If the center $Z(G)$ is not smooth, we are done. Due to 
Lemma \ref{lem:G-vg} (a) 
(and because all primes are good for $\Phi$)
this means that we can assume that $Y/\ZZ\Phi^\vee$ has
$p$-torsion. Let $s = s_1 \cdots s_n$ be the element of the Weyl group of $G$ obtained
by multiplying Coxeter elements $s_i$ of the Weyl groups of type $A_{m_i}$.
We take here $s_i = s_{i1} \cdots s_{im_i}$, where the $s_{ij}$ are 
the simple reflections for some chosen root base 
$\Delta_i = \{ \alpha_{i1},\dots,\alpha_{i m_i} \}$
of $A_{m_i}$, which is ordered as in \cite[Planche I]{Bour:1968}.
Then $\Delta = \Delta_1 \cup \dots \cup \Delta_n$ is a root base of $\Phi$,
and hence a basis of $\ZZ\Phi$.
Let $\Delta^\vee$ be the corresponding dual base of $\Phi^\vee$.
For $\lambda \in X$ an easy induction on $n$ (and for $n=1$ on $m_1$) proves the equation
\begin{align*}
s(\lambda) = \lambda - \sum_{i=1}^n \sum_{j=1}^{m_i}
\sum_{k=j}^{m_i} \langle \lambda, \alpha_{ik}^\vee \rangle \alpha_{ij}.
\end{align*}
Consider the map $s-1 : X \rightarrow \ZZ \Phi$, 
$\lambda \mapsto s(\lambda)-\lambda$. We are going to compute the
elementary divisors of $(s-1)X$ in $\ZZ \Phi$.
Let $\mathfrak{b} = \{\lambda_1, \dots, \lambda_r\}$ be a basis of $X$ and 
$\mathfrak{b}^\vee = \{\lambda_1^\vee, \dots, \lambda_r^\vee \}$ a dual basis of $Y$ with respect
to $\langle,\rangle$.
Then the matrix $M$ of $(s-1)$ with respect to $\mathfrak{b}$
and $\Delta$ is given by the entries
\begin{align*}
M_{(i,j),l} = 
-\sum_{k=j}^{m_i} \langle \lambda_l, \alpha_{ik}^\vee \rangle,
\end{align*}
where $(i,j)$ varies in $i=1,\dots,n$, $j=1,\dots,m_i$, and $l=1,\dots,r$.
We have to compute the elementary divisors of $M$. We may replace
each $(i,j)$-th row by the $(i,j+1)$-th row minus the $(i,j)$-th row,
for $i=1,\dots,n$ and $j < m_i$, and obtain a matrix with entries
$\langle \lambda_l, \alpha_{ij}^\vee \rangle$.
But this is precisely the transpose of the matrix describing the map
$\ZZ \Phi^\vee \hookrightarrow Y$ with respect to the bases 
$\Delta^\vee$ and $\mathfrak{b}^\vee$.
We conclude that the elementary divisors of $(s-1)X$ in $\ZZ \Phi$ 
coincide with the elementary divisors of $\ZZ \Phi^\vee$ in $Y$,
so that the group $\ZZ\Phi/(s-1)X$ has $p$-torsion. Hence also the group
$X/(s-1)X$ has $p$-torsion.
This is the character group of $T^\sigma$, where 
$\sigma = s^{-1}$. Hence $T^\sigma$ is a non-smooth group scheme,
by the smoothness criterion referred to in the proof of Lemma \ref{lem:torsion}.
For a chosen representative $n_\sigma \in N_{G}(T)$, we may write this
group as $C_{G}(H)$, where $H$ is the subgroup generated by
$T$ and $n_\sigma$.
Our search has come to an end.
\end{example}

Combined with Theorem \ref{thm:main}, these examples allow us to prove the theorem
from the introduction:

\begin{proof}[Proof of Theorem \ref{thm:examples}]
The forward implication is clear by Theorem \ref{thm:main}. 
Now suppose all centralizers in $G$ are smooth.
According to Example \ref{ex:counterexamples}
(a), the characteristic needs to be good. Under this assumption we showed in the proof of 
Theorem \ref{thm:main}
that there is a separable isogeny $\tilde{G} \times H \rightarrow G$, where $H$ is defined
in very good characteristic, and $\tilde{G}$ is as in Example \ref{ex:counterexamples} (b).
If the characteristic was not pretty good for $\tilde{G}$, Example \ref{ex:counterexamples} (b)
would provide a subgroup scheme $\tilde{H}$ such that $C_{\tilde{G} \times H}(\tilde{H} \times 1)
= C_{\tilde{G}}(\tilde{H}) \times H$ would be non-smooth.
By Lemma \ref{lem:sep-isogeny}, this would give rise to a non-smooth centralizer in $G$,
contradicting our assumption. 
So the characteristic must be pretty good for $\tilde{G}$, and hence
for $\tilde{G} \times H$, by Lemma
 \ref{lem:G-vg} (e). According to part (d) of the same Lemma, it is also pretty good for $G$. 
\end{proof}

\begin{remark}
Theorem \ref{thm:examples} also shows that the existence of a reductive
pair $(\GL_n,G)$ (for some $n$) puts restrictions on the characteristic of $k$.
Indeed, suppose that the characteristic is not pretty good for $G$. Then the existence of
non-smooth centralizers implies (according to Lemmas \ref{lem:caseGL} and
\ref{lem:red-pair}) that we \emph{never} can find an inclusion $G \rightarrow
\GL_n$ that gives a reductive pair $(\GL_n,G)$. 
\end{remark}

\begin{remark}
The groups $H$ in Example \ref{ex:counterexamples} (a) and (b) have in common that
they are smooth, reductive (not necessarily connected), and contain a maximal torus of $G$. Using Lemma 
\ref{lem:sep=smooth} and Lemma \ref{lem:sep-isogeny}, the proof of Theorem
\ref{thm:examples} can be strengthened to yield the equivalence of the following three statements
about a connected reductive group $G$:
\begin{itemize}
\item[(a)] $\cha(k)$ is zero or pretty good for $G$, 
\item[(b)] all centralizers of closed subgroups schemes of $G$ are smooth,
\item[(c)] all (smooth) reductive subgroups of $G$ that contain a maximal torus are separable in $G$.
\end{itemize}

\end{remark}

\section{Standard reductive groups}
\label{sec:standard}

We finally want to relate our notion of pretty good primes to existing notions
of so called \emph{standard} reductive groups. Let $k$ be an algebraically closed field.
Consider the following conditions on a connected reductive group $G$ over $k$:

\begin{itemize}
\item $G$ is \emph{standard}, i.e.\  the derived group of $G$ is simply
connected, $\cha(k)$ is good for $G$, and $\Lie(G)$ carries a non-degenerate bilinear form
that is invariant under the $G$-action (these are the ``standard hypotheses'' of Jantzen,
see \cite[2.9]{Jantzen:2004}).
\item $G$ is \emph{$T$-standard}, i.e.\ $G$ is of the following form: it is separably isogenous 
to a Levi subgroup of $H$,
where $H$ is a smooth group scheme of the form $H =
H_1 \times T$ with a torus $T$ and a semisimple group $H_1$ for which the
characteristic of $k$ is very good (this notion is due to McNinch, see \cite{Mc:2008};
they are also called ``strongly standard'' in 
\cite{Mc:2005}).
\item $G$ is \emph{$D$-standard}, i.e.\  it is separably isogenous to a group $C_H(D)^\circ$, where $H$ is
as in the case above, 
and where $D \subseteq H$ is a (not necessarily smooth) subgroup scheme of
multiplicative type 
(this notion is due to McNinch and
Testerman, see \cite{MT:2009}; for smooth $D$ they are called ``strongly standard'' in 
\cite{MT:2007}).
We note here that, in fact, a group of
the form $C_H(D)^\circ$ is always reductive: it is smooth due to Theorem
\ref{thm:lissite}, and $H/C_H(D)$ is affine (\cite[XII, 5.5]{SGA3}), which
implies reductivity (\cite{Rich:1977}).
\end{itemize}

We slightly enlarge the classes of connected reductive groups defined by these properties:
We say that $G$ is \emph{essentially standard} (resp.\ \emph{essentially $T$-standard}, 
\emph{essentially $D$-standard}),
if it may be obtained from a standard (resp.\ $T$-standard, $D$-standard) group $H$ after a finite
number of the following operations: 
\begin{itemize}
\item[(i)] the replacement of $H$ by a separably isogenous group;
\item[(ii)] the replacement of $H=H' \times S$ by $H'$, where $S$ is a torus.
\end{itemize}

\begin{remark}
Cancelling off a torus factor really has some effect. For example, consider the groups
$G=\GL_5$ and $G'=\GL_5/\mu_2$, where $\mu_2$ is embedded in the center in the obvious way. Then
$G$ and $G'$ are non-isomorphic connected reductive groups.
However, the groups $G \times \Gm$ and $G' \times \Gm$ are isomorphic.
This can be verified by a small root data calculation.
It implies that operation (ii) applied to $G \times \Gm$ may produce $G'$.
\end{remark}

\begin{thm} \label{thm:standard}
Let $G$ be a connected reductive group defined over $k$. Then the following are equivalent:
\begin{itemize}
\item[(a)] The characteristic of $k$ is zero or pretty good for $G$,
\item[(b)] $G$ is essentially standard,
\item[(c)] $G$ is essentially $T$-standard,
\item[(d)] $G$ is essentially $D$-standard.
\end{itemize}
\end{thm}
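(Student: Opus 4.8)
The plan is to prove the cycle of implications (b) $\Rightarrow$ (c) $\Rightarrow$ (d) $\Rightarrow$ (a) $\Rightarrow$ (b), or some convenient rearrangement, using the characterization of pretty good primes from Lemma~\ref{lem:G-vg} and the stability properties of separable isogenies recorded in Section~\ref{sec:prelim}. First I would observe that condition (a) is insensitive to the two enlarging operations (i) and (ii): by Lemma~\ref{lem:G-vg}(e) passing to or from a torus factor does not change pretty-goodness, and by Lemma~\ref{lem:G-vg}(d) a separable isogeny (which on root data has cokernel of order invertible in $k$, hence $p$-torsion-free) preserves pretty-goodness in both directions. So to prove (b)$\Rightarrow$(a), (c)$\Rightarrow$(a), (d)$\Rightarrow$(a) it suffices to check that a \emph{standard}, \emph{$T$-standard}, resp.\ \emph{$D$-standard} group has pretty good (or zero) characteristic. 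For $T$-standard groups this is essentially Lemma~\ref{lem:G-vg}: a Levi subgroup of $H_1 \times T$ has root datum that is a sub-root-datum in a suitable sense; one checks its character/cocharacter quotients inherit $p$-torsion-freeness from the very-good (hence by Lemma~\ref{lem:G-vg}(c) pretty good in the semisimple case) group $H_1$ and the torus. For $D$-standard groups, one needs that $C_H(D)^\circ$ — which by the parenthetical remark in the statement is reductive — again has pretty good characteristic; here the root datum of $C_H(D)^\circ$ relative to a maximal torus through which $D$ factors has root system a subsystem $\Phi'$ of that of $H$, and the relevant quotients embed into quotients for $H$, so $p$-torsion-freeness is inherited. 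For the \emph{standard} case one uses that a simply connected semisimple group in good characteristic with an invariant non-degenerate form on its Lie algebra forces, type by type, the characteristic to be very good (this is the classical fact behind Jantzen's standard hypotheses), and then Lemma~\ref{lem:G-vg}(c) again.

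The substantial direction is (a)$\Rightarrow$(b) (or (a)$\Rightarrow$(c)), and here the plan is to \emph{reuse the explicit construction from the proof of Theorem~\ref{thm:main}} almost verbatim. There we showed: if $\cha k$ is zero or pretty good for the connected reductive group $G$, then after cancelling/adjoining a torus (operation (ii)) and passing through separable isogenies (operation (i)) one reduces to $\tilde G \times G_{n+1} \times \dots \times G_t$, where $G_{n+1},\dots,G_t$ are simple in very good characteristic and $\tilde G$ has root system a product of type $A$'s $A_{m_1}\times\cdots\times A_{m_n}$ with $p \mid m_i+1$ for all $i$; and then the chain of separable isogenies $\tilde G \leftarrow H_1' \to H_2' \leftarrow \GL_{m_1+1}\times\cdots\times\GL_{m_n+1}\times\Gm^{r-n}$ connects $\tilde G$ to a product of $\GL$'s and a torus. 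A product of general linear groups is standard (derived group $\SL\times\cdots$ is simply connected, all primes are good, and $\gl_n$ carries the trace form, which is non-degenerate on $\gl_n$ in all characteristics — note this is where $\GL$ rather than $\SL$ or $\PSL$ matters, exactly as in the Example after Lemma~\ref{lem:G-vg}), as is a product of simple groups in very good characteristic; and a product of standard groups is standard. Hence $\tilde G \times G_{n+1}\times\cdots\times G_t$ is obtained from a standard group by operations (i) and (ii), i.e.\ it is essentially standard, and therefore so is $G$. The same construction, read as building a group separably isogenous to a Levi of a product of $\GL$'s and simple very-good groups times a torus, yields essential $T$-standardness, and (since $T$-standard $\Rightarrow$ $D$-standard, taking $D$ trivial) essential $D$-standardness; so in fact (a) implies each of (b), (c), (d) directly, and the easy implications above close the equivalence. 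One should also record the trivial implications among (b), (c), (d) coming from the inclusions \{standard\} $\supseteq$ ? — more precisely a product of $\GL$'s is standard, every $T$-standard group is $D$-standard, so essentially standard and essentially $T$-standard both imply essentially $D$-standard once one checks a $\GL$-product is $T$-standard (it is its own Levi, and $\GL_n$ is very-good-isogenous to nothing smaller, but one may instead simply note $\GL_n \subseteq \GL_n$ is a Levi of itself and $\GL_n = \GL_n$, so it qualifies with $H_1$ trivial only if... ) — this bookkeeping is routine but must be done carefully.

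The main obstacle, as the above parenthetical worry signals, is the careful matching of definitions: a product of $\GL$'s is manifestly \emph{standard} (Jantzen's hypotheses), but showing it is \emph{$T$-standard} in the literal sense — separably isogenous to a \emph{Levi subgroup} of $H_1 \times T$ with $H_1$ semisimple very-good — requires writing $\GL_n$ (up to separable isogeny and torus cancellation) as such a Levi; one handles this by recalling that $\GL_n$ is separably isogenous to $\SL_n \times \Gm$ when $p \nmid n$, but when $p \mid n$ one genuinely needs the torus-cancellation operation (ii), which is exactly why the \emph{essentially} $T$-standard class, and not the $T$-standard class itself, appears in the theorem. So the real content of the hard direction is: (1) verify, type by type or via Lemma~\ref{lem:G-vg}, that the reduction steps of Theorem~\ref{thm:main}'s proof only ever use separable isogenies and torus factors; (2) identify the terminal group of that reduction as a product of $\GL$'s, simple very-good groups, and a torus; and (3) match this terminal group to each of the three standardness notions modulo operations (i), (ii). Step (3) is where one must be scrupulous about which groups literally satisfy the unmodified definitions versus only their ``essential'' versions. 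The converse bookkeeping — that standard, $T$-standard, $D$-standard groups have pretty good characteristic — is comparatively light, being a direct application of Lemma~\ref{lem:G-vg}(b),(c),(d),(e) together with the classification input that Jantzen's form condition in good characteristic forces very-goodness.
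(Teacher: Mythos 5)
Your overall architecture (each of (b),(c),(d) implies (a) via the stability of pretty-goodness under operations (i),(ii), and (a) implies all three via the decomposition from the proof of Theorem~\ref{thm:main}) matches the paper's, and your direct root-datum argument for (d)$\Rightarrow$(a) is a legitimate alternative to the paper's route (which instead invokes Theorem~\ref{thm:examples} and checks that $(H,C_H(D)^\circ)$ is a reductive pair via the $D$-weight-space decomposition of $\Lie(H)$) --- though your version silently uses that a multiplicative-type subgroup scheme $D$ of $H$ lies in a maximal torus, which you should justify. However, your step (b)$\Rightarrow$(a) contains a genuine error. You claim that Jantzen's standard hypotheses force the characteristic to be very good for the semisimple part, ``type by type,'' and then appeal to Lemma~\ref{lem:G-vg}(c). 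This is false: $\GL_n$ with $p \mid n$ satisfies all of Jantzen's hypotheses ($\Der(\GL_n)=\SL_n$ is simply connected, every $p$ is good for type $A$, and the trace form on $\gl_n$ is non-degenerate in every characteristic), yet $p$ is \emph{not} very good for $\SL_n$. The non-degenerate form in the standard hypotheses lives on $\Lie(G)$, not on $\Lie(\Der G)$, and the trace form on $\mathfrak{sl}_n$ is degenerate when $p\mid n$ (the identity matrix lies in its radical); so you cannot pass from the reductive group to its derived group and conclude very-goodness. Since $\GL_n$ with $p\mid n$ is precisely the prototypical standard-but-not-very-good group, your argument breaks on the central example. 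The correct deduction, as in the paper, is: the standard hypotheses imply that $Z(G)$ is smooth (via smoothness of a suitable centralizer $C_G(X)^\lambda$, which is where the non-degenerate form is actually used) and that $Z(G^\vee)/\Rad(G^\vee)$ is trivial because $\Der(G)$ is simply connected; hence $X/\ZZ\Phi$ and $Y/\ZZ\Phi^\vee$ have no $p$-torsion, and combined with goodness of $p$, Lemma~\ref{lem:G-vg}(a) gives pretty good.

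Two smaller points. First, you leave unresolved whether a product of $\GL_n$'s is literally $T$-standard; it is, but not because $\GL_n$ is a Levi of itself (it is not of the form $H_1\times T$ with $H_1$ semisimple very good when $p\mid n$) --- one realizes $\GL_n$ as a Levi subgroup of a suitable very-good semisimple group, which is exactly the content of the reference the paper cites for this step. Second, for (c)$\Rightarrow$(b) you propose to verify Jantzen's hypotheses directly for the terminal group of the reduction rather than citing that every $T$-standard group is separably isogenous to a standard one; that is workable, but you must then check that the very-good simple factors carry non-degenerate invariant forms on their Lie algebras and are (up to separable isogeny) simply connected, neither of which you address.
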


\begin{proof}
Any Levi subgroup of a connected reductive group may be written as the centralizer of a suitable torus.
So clearly, (c) implies (d). 

Suppose (d) is satisfied. We claim that (a) holds.
According to Theorem \ref{thm:examples} it suffices to show that all centralizers in $G$ are 
smooth. 
By Lemma \ref{lem:sep-isogeny} we may assume that 
$G = C_H(D)^\circ$, where $D
\subseteq H$ is a diagonalizable subgroup scheme, and where $H$ is a reductive
group in very good characteristic.
It now suffices to show, by Theorem \ref{thm:main} and Lemma \ref{lem:red-pair}, that
$(H,C_H(D)^\circ)$ is a reductive pair.
We can finish the argument by employing
the ideas of \cite[Prop.\ 3.7 (a)]{BMRT:2010}: As a $D$-module, the Lie
algebra $\h = \Lie(H)$ decomposes into a direct sum $\h = \bigoplus_\chi
\h_\chi$, where $\chi$ varies over the character group of $D$ (see
\cite[II, \S2, 2.5]{DG:1970}). Moreover, $\h_0 = \h^D = \Lie(C_H(D)) =
\Lie(C_H(D)^\circ)$, so that the reductive pair condition is satisfied.

Suppose that (a) holds. In the proof of Theorem \ref{thm:main} we have shown, that up to
separable isogenies and direct products with a torus, $G$ is a direct product of 
groups of the following form: a torus, some $\GL_n$, a simple group in very good
characteristic. Since $\GL_n$ is $T$-standard (see e.g.\ \cite[Rem.\ 3]{Mc:2005}), the whole
direct product is $T$-standard, and (c) holds.

Suppose $G$ is $T$-standard. Then $G$ is separably isogenous to a standard group, by
\cite[Prop.\ 2]{Mc:2005}. This means that (c) implies (b).

Suppose that $G$ is standard. Then the center $Z(G)$ is smooth (by the argument of 
\cite[3.4.2]{MT:2009}: $Z(G)=C_G(X)^\lambda$ for some $X \in \Lie(G)$ and some
cocharacter $\lambda$, and Jantzen's conditions guarantee that the centralizer $C_G(X)$ is
smooth). The same is true for $Z(G^\vee)$, for example since $Z(G^\vee)/\Rad(G^\vee)$ is
trivial due to the fact that $\Der(G)$ is simply connected. Since $p$ is also good,
it is pretty good, by Lemma \ref{lem:G-vg} (a). 
Since the class of groups in pretty good characteristic is closed under the operations 
(i) and (ii) above, any essentially standard group is now defined in pretty good characteristic.
We have shown that (b) implies (a), which finishes
  the proof.
\end{proof}

By Theorem \ref{thm:examples}, we deduce the following consequence.

\begin{corollary} \label{cor:univsm}
Let $G$ be a connected reductive group satisfying any of the three standardness conditions above.
Then all centralizers of closed subgroup schemes in $G$ are smooth.
\end{corollary}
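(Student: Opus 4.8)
The plan is to assemble the corollary directly from Theorems \ref{thm:examples} and \ref{thm:standard}, since all the substantive work has already been carried out. First I would observe that each of the three standardness conditions is trivially a special case of its \emph{essential} counterpart: a standard group is in particular essentially standard (apply the allowed operations (i)--(ii) zero times), and likewise a $T$-standard group is essentially $T$-standard and a $D$-standard group is essentially $D$-standard. Hence if $G$ satisfies any one of the three conditions, then at least one of conditions (b), (c), (d) of Theorem \ref{thm:standard} holds for $G$.

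Next I would invoke the equivalence in Theorem \ref{thm:standard} to conclude that condition (a) of that theorem holds, i.e.\ that $\cha(k)$ is zero or pretty good for $G$. Finally, Theorem \ref{thm:examples} (equivalently, its forward implication, which is proved via Theorem \ref{thm:main}) immediately gives that all centralizers of closed subgroup schemes in $G$ are smooth, which is exactly the assertion of the corollary.

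As for obstacles, there is essentially none at this stage: the corollary is a bookkeeping consequence of the three main theorems. The only point worth spelling out is the routine implication ``standard $\Rightarrow$ essentially standard'' and its two analogues, which is immediate from the definitions. All the genuine content --- the smoothness of centralizers in pretty good characteristic, and the fact that the various standardness notions coincide with being defined in pretty good characteristic --- resides in Theorems \ref{thm:main}, \ref{thm:examples} and \ref{thm:standard}, so nothing further is required here.
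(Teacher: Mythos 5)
Your argument is correct and is exactly the route the paper takes: the corollary is deduced by noting that each standardness condition is a special case of its essential counterpart, applying Theorem \ref{thm:standard} to get that the characteristic is zero or pretty good for $G$, and then invoking Theorem \ref{thm:examples}. Nothing further is needed.
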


\begin{remark}
George McNinch suggested to give an easier direct description of essentially standard groups, starting from
the easiest examples, and operations like (i) and (ii) above. We can do this as follows. Consider
the class of connected reductive groups with the following properties:
\begin{itemize}
\item it contains all simple groups defined in very good characteristic;
\item it contains $\GL_n$ for all $n$;
\item it contains all tori;
\item it is closed under taking products and operations (i), (ii) as above.
\end{itemize}
By Lemma \ref{lem:G-vg}, all groups in this class are defined in pretty good characteristic
(for $\GL_n$ we use part (a) of the Lemma and the fact that in this case $X(T)/\ZZ\Phi \cong \ZZ \cong Y(T)/\ZZ\Phi^\vee$). 
The converse follows from the proof of the implication $(a) \Rightarrow (c)$ in Theorem \ref{thm:standard} above.

By the characterization in terms of the smoothness of all centralizers, the class of groups defined in
pretty good characteristic is closed under taking subgroups such that we obtain a reductive pair. In
particular, it is closed under taking centralizers of diagonalizable subgroup schemes (cf.\ the proof
of $(d) \Rightarrow (a)$ in Theorem \ref{thm:standard}).
\end{remark}

The last remark gives a very easy recipe to prove results for groups defined in pretty good characteristic.
We illustrate this with the existence of \emph{Springer isomorphisms}.

\begin{corollary}
Let $G$ be a connected reductive group defined in pretty good characteristic. Then there is a $G$-equivariant
isomorphism of varieties $\U \rightarrow \N$, where $\U\subseteq G$ (resp. $\N\subseteq \g$) is the unipotent (resp.\
nilpotent) variety associated to $G$.
\end{corollary}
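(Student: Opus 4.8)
The plan is to reduce the statement to known cases via the standardness-transfer machinery developed in this section. Recall that Springer isomorphisms are known to exist whenever $G$ is defined in very good characteristic (this is the classical result of Springer, extended by Bardsley--Richardson and others), and they are also known for $\GL_n$ in any characteristic, as well as trivially for tori (where $\U = \N = \{e\}$ or $\{0\}$). The idea is that the class of connected reductive groups admitting a $G$-equivariant isomorphism $\U \to \N$ is stable under the operations that generate ``pretty good characteristic'' groups from these building blocks, as described in the last remark.

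First I would observe that if $G = G_1 \times G_2$, then $\U_G = \U_{G_1} \times \U_{G_2}$ and $\N_{\g} = \N_{\g_1} \times \N_{\g_2}$ as $G$-varieties, so a Springer isomorphism for $G$ is obtained by taking the product of Springer isomorphisms for $G_1$ and $G_2$; thus the class is closed under direct products. Next, I would handle operation (ii): if $G = G' \times S$ for a torus $S$, then $\U_G = \U_{G'} \times \{e\}$ and $\N_\g = \N_{\g'} \times \{0\}$, and any $G'$-equivariant isomorphism $\U_{G'} \to \N_{\g'}$ is automatically $G$-equivariant (the torus acts trivially on both unipotent and nilpotent varieties), so passing to or from a torus factor preserves the existence of a Springer isomorphism. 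Finally, for operation (i): if $\pi : G \to G'$ is a separable isogeny, then $\pi$ restricts to a $G$-equivariant isomorphism of unipotent varieties $\U_G \xrightarrow{\sim} \U_{G'}$, and its differential $d\pi : \g \to \g'$ is a $G$-equivariant isomorphism (since $\ker \pi$ is smooth and finite, $d\pi$ is injective, hence bijective for dimension reasons) that restricts to an isomorphism of nilpotent varieties $\N_\g \xrightarrow{\sim} \N_{\g'}$; here $G$ acts on $G'$-objects through $\pi$, but since $\pi$ is surjective this does not affect equivariance of the final composite. Composing, a Springer isomorphism for $G'$ transports to one for $G$, and vice versa.

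Putting these together: by the proof of the implication $(a) \Rightarrow (c)$ in Theorem \ref{thm:standard} (or equivalently by the explicit description in the preceding remark), a connected reductive group $G$ in pretty good characteristic is obtained, after operations (i) and (ii), from a direct product of tori, copies of $\GL_n$, and simple groups in very good characteristic. Since each of these building blocks admits a $G$-equivariant Springer isomorphism, and since the three operations above preserve this property, we conclude that $G$ does too. In characteristic zero the statement is classical (all the relevant objects are smooth and the exponential-type constructions work directly), so the proof is complete.

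The main obstacle I anticipate is verifying the equivariance claims carefully when passing along a separable isogeny $\pi : G \to G'$, since the $G$-action on $G'$-data is twisted through $\pi$; one must check that the composite isomorphism $\U_G \to \U_{G'} \to \N_{\g'} \to \N_\g$ is equivariant for the honest $G$-action on both ends, which follows from surjectivity of $\pi$ but deserves a line of justification. A secondary point worth care is that a separable isogeny need not be an isomorphism on unipotent varieties in bad characteristic --- but here it is, precisely because separability (smoothness of the kernel) forces $d\pi$ to be injective and the kernel to meet no root subgroup, so $\pi$ restricts to an isomorphism $\U_G \to \U_{G'}$; this is exactly the kind of input that pretty good characteristic is designed to guarantee.
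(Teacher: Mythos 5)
Your proof is correct and follows essentially the same route as the paper: reduce via the generating recipe (simple groups in very good characteristic, $\GL_n$, tori, closed under products and operations (i) and (ii)) and check that the existence of a Springer isomorphism is stable under each operation. The only difference is cosmetic — the paper delegates the very-good-characteristic case and the compatibility with separable isogenies to McNinch \cite[Prop.\ 9]{Mc:2005}, whereas you sketch the isogeny transfer directly; your sketch is sound since the kernel of a separable isogeny is a smooth finite central (hence semisimple) subgroup, so $\pi$ is étale and bijective on unipotent varieties and $d\pi$ is an isomorphism.
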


\begin{proof}
The result holds for $\GL_n$ and for tori.
In \cite[Prop.\ 9]{Mc:2005}, McNinch shows how to extend Springer's work to $T$-standard groups,
by first extending the result to semisimple groups defined in very good characteristic. 
He also shows that the existence of Springer isomorphisms
is compatible with operation (i). The compatibility with operation (ii) and with taking direct products is obvious.
\end{proof}

\bigskip
{\bf Acknowledgements}: This paper was prepared towards the author's PhD
qualification under the supervision of Gerhard R\"ohrle, with financial support
of the DFG-priority program SPP1388 ``Representation Theory''. I thank Gerhard
R\"ohrle for help in preparing this paper. I am also very grateful to Olivier
Brunat for helpful discussions, and to Steve Donkin for pointing out the
example in Remark \ref{rem:cohom} (c). I am indebted to George McNinch for comments and
questions
on an earlier version of this manuscript.
Finally, I thank the referee for numerous improvements of the exposition.

\end{document}